\newcommand{\R}{\mathbb{R}}
\newcommand{\cR}{\mathcal{R}}
\newcommand{\cD}{\mathcal{D}}
\newcommand{\Met}{\emph{Met}}
\newcommand{\ol}{\overline}
\newtheorem{thm}{Theorem}
\newtheorem{lemma}[thm]{Lemma}
\theoremstyle{definition}
\newtheorem{definition}[thm]{Definition}
\theoremstyle{remark}
\newtheorem{remark}{Remark}
\newtheorem*{ack}{Acknowledgments}
\DeclareMathOperator{\Div}{div}
\DeclareMathOperator{\Hess}{Hess}
\DeclareMathOperator{\tr}{tr}
\DeclareMathOperator{\dist}{dist}
\def\madm{m_{\mathrm{ADM}}}
\begin{document}

\title[Lower semicontinuity of the ADM mass]{Lower semicontinuity of the ADM mass in dimensions two through seven}
\author{Jeffrey L. Jauregui}
\address{Dept. of Mathematics,
Union College, 807 Union St.,
Schenectady, NY 12308}
\email{jaureguj@union.edu}

\begin{abstract}
The semicontinuity phenomenon of the ADM mass under pointed (i.e., local) convergence of asymptotically flat metrics is of interest because of its connections to nonnegative scalar curvature, the positive mass theorem, and Bartnik's mass-minimization problem in general relativity. In this paper, we
extend a previously known semicontinuity result in dimension three for $C^2$ pointed convergence to higher dimensions, up through seven, using recent work of S. McCormick and P. Miao (which itself builds on the Riemannian Penrose inequality of H. Bray and D. Lee).
For a technical reason, we restrict to the case in which the limit space is asymptotically Schwarzschild. In a separate result, we show that semicontinuity holds under weighted, rather than pointed, $C^2$ convergence, in all dimensions $n \geq 3$, with a simpler proof independent of the positive mass theorem. Finally, we also address the two-dimensional case for pointed convergence, in which the asymptotic cone angle assumes the role of the ADM mass.
\end{abstract}

\maketitle 

\section{Introduction}

Motivated by the Bartnik minimal mass extension conjecture in general relativity \cites{Ba1, Ba2, Ba3}, as well as the study of Ricci flow on asymptotically flat manifolds \cites{DM, OW}, in \cite{J} the author established the following result regarding how the ADM mass behaves under pointed convergence of a sequence of asymptotically flat 3-manifolds of nonnegative scalar curvature. Briefly, the ADM mass cannot increase in a local $C^2$ limit:

\begin{thm}[\cite{J}]
\label{thm1}
Let $(M_i, g_i, p_i)$ be a sequence of pointed asymptotically flat 3-manifolds without boundary, such that each $(M_i ,g_i)$ has nonnegative scalar curvature and contains no compact minimal surfaces. If $(M_i, g_i, p_i)$ converges in the pointed $C^2$ Cheeger--Gromov sense to a pointed asymptotically flat 3-manifold $(N, h, q)$, then 
\begin{equation}
\label{eqn_LSC}
 \madm(N, h) \le \liminf_{i\to\infty} \madm(M_i, g_i).
\end{equation}
\end{thm}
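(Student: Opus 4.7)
The plan is to sandwich the ADM mass of both the limit and the approximants via the Hawking mass of a single compact surface, using the Geroch--Huisken--Ilmanen bound $m_H(\Sigma) \le \madm$ that comes from weak inverse mean curvature flow (IMCF) in asymptotically flat three-manifolds of nonnegative scalar curvature. Recall $m_H(\Sigma) = \sqrt{|\Sigma|/(16\pi)}\bigl(1 - \tfrac{1}{16\pi}\int_\Sigma H^2\,dA\bigr)$, and recall that Huisken--Ilmanen's theorem yields the bound $m_H(\Sigma) \le \madm(M,g)$ for any connected $C^{1,\alpha}$ surface with $H > 0$ in an asymptotically flat 3-manifold of nonnegative scalar curvature and without compact minimal surfaces (after first passing to the outer-minimizing hull, a step that does not decrease the Hawking mass when $H \ge 0$).

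First, I would lower-bound $\madm(N, h)$ by a Hawking mass computed in a compact region. Fix $\varepsilon > 0$. On the asymptotically flat end of $(N, h)$, large coordinate spheres $S_r$ satisfy $m_H(S_r) \to \madm(N, h)$ as $r \to \infty$ by a standard coordinate computation in the AF chart. Choose $r$ large enough that $H_{S_r} > 0$ and $m_H(S_r) > \madm(N, h) - \varepsilon$.

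Next, I would transplant $S_r$ into the $M_i$ using pointed $C^2$ convergence: for $i$ large there are diffeomorphisms $\phi_i$ from a bounded neighborhood of $S_r$ in $N$ into $M_i$ with $\phi_i^* g_i \to h$ in $C^2$. Setting $\Sigma_i = \phi_i(S_r)$, the $C^2$ metric convergence gives $C^0$ convergence of the second fundamental forms of $\Sigma_i \subset (M_i, g_i)$ to that of $S_r \subset (N, h)$, so $\Sigma_i$ is connected with positive mean curvature for $i$ large and $m_H(\Sigma_i) \to m_H(S_r)$. Applying Huisken--Ilmanen on each $(M_i, g_i)$ then gives $m_H(\Sigma_i) \le \madm(M_i, g_i)$, and chaining,
\[
\madm(N, h) - \varepsilon < m_H(S_r) = \lim_{i \to \infty} m_H(\Sigma_i) \le \liminf_{i \to \infty} \madm(M_i, g_i),
\]
so sending $\varepsilon \to 0$ yields \eqref{eqn_LSC}.

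I expect the main obstacle to lie in the last step: the IMCF bound $m_H(\Sigma_i) \le \madm(M_i, g_i)$ depends on the outer-minimizing hull of $\Sigma_i$ in $M_i$, which is determined by the geometry of $M_i$ far outside the region where $\phi_i^* g_i$ is close to $h$. This is presumably why the absence of compact minimal surfaces in each $M_i$ is built into the hypothesis: it guarantees that the hull is compact, that IMCF reaches spatial infinity, and that the jump to the hull does not decrease the Hawking mass (since $H_{\Sigma_i} > 0$). The other ingredients are soft: $C^2$ convergence of metrics yields $C^1$ convergence of Christoffel symbols and hence of second fundamental forms on a fixed surface, which comfortably suffices for the Hawking masses to pass to the limit.
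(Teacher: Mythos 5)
Your outline follows the same template as the paper's proof of Theorem \ref{thm2} (and of Theorem \ref{thm1} in \cite{J}): approximate $\madm(N,h)$ by a quasi-local quantity on a large coordinate sphere, transplant that sphere into $M_i$ by the Cheeger--Gromov maps, and apply a mass lower bound of the form \eqref{eqn_HI} in each $(M_i,g_i)$. But the step you flag as ``the main obstacle'' is in fact the entire content of the proof, and the justification you offer for it is not correct. The bound \eqref{eqn_HI} holds for an \emph{outward-minimizing} surface; for a general connected surface $\Sigma$ with $H>0$ one only gets $\madm \geq m_H(\Sigma')$ for the strictly minimizing hull $\Sigma'$, and the jump to the hull \emph{can} decrease the Hawking mass: one has $|\Sigma'|\leq|\Sigma|$ and $\int_{\Sigma'}H^2\leq\int_\Sigma H^2$, so when the bracket $1-\tfrac{1}{16\pi}\int H^2$ is positive (which is exactly the regime you are in, since $m_H(S_r)\to\madm(N,h)>0$) the area factor shrinks and $m_H(\Sigma')$ can be far below $m_H(\Sigma)$ unless $|\Sigma'|$ is controlled from below. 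The absence of compact minimal surfaces does not by itself give this control, because the minimizing hull of $\Sigma_i$ is determined by the geometry of $M_i$ outside the region where $\Phi_i^*g_i$ is close to $h$, about which the pointed convergence says nothing. The paper's Lemma \ref{lemma_tentacle} closes this gap with a three-part argument: the no-minimal-surface hypothesis rules out components of the hull boundary disjoint from $\Sigma_i$; the mean curvature comparison principle against nearby coordinate spheres (which have $H>0$ for $h_i$ once $i$ is large) forces the hull boundary to equal $\Sigma_i$ if it stays in the controlled annulus; and a component that meets $\Sigma_i$ but escapes the controlled region is excluded quantitatively by the monotonicity formula for almost-minimizing currents, which forces such a ``tentacle'' to have more area than $\Sigma_i$ itself, contradicting area-minimality of the hull. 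Without some version of this argument your chain $m_H(\Sigma_i)\leq\madm(M_i,g_i)$ is unjustified.

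A second, smaller gap: the claim that $m_H(S_r)\to\madm(N,h)$ for coordinate spheres is \emph{not} a routine computation for a general asymptotically flat limit. With decay of order $\tau\in(\tfrac12,1]$ one only gets $|S_r|=4\pi r^2(1+O(r^{-\tau}))$ and $\int_{S_r}H^2\,dA=16\pi+O(r^{-\tau})$, so $m_H(S_r)=O(r^{1-\tau})$, which need not converge to $\madm$ (or converge at all). This is precisely the technical reason the present paper assumes the limit is asymptotically Schwarzschild and proves Lemma \ref{lemma_ADM_AS}; the analogous issue must be addressed (by a density argument or by stronger asymptotics) before the first inequality in your chain is available.
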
 
We recall the relevant definitions in Section \ref{sec_background}; for now we note that pointed $C^k$ Cheeger--Gromov convergence essentially means $C^k$ convergence of the metric tensors on compact subsets, modulo diffeomorphisms. Examples are given in \cite{J} in which strictness holds in \eqref{eqn_LSC}.

Theorem \ref{thm1} is intimately connected to scalar curvature and to the positive mass theorem (PMT) \cites{SY,W}. In \cite{J} it was shown that (\ref{eqn_LSC}) can fail without assuming nonnegative scalar curvature (and the absence of compact minimal surfaces). Somewhat surprisingly, a simple blow-up example in \cite{J} shows that Theorem \ref{thm1} actually implies the PMT. However, to prove Theorem \ref{thm1}, either the PMT itself, or a stronger result, is required. The key estimate in the proof of Theorem \ref{thm1} was the lower bound
\begin{equation}
\label{eqn_HI}
m_{ADM} \geq m_H(\Sigma)
\end{equation}
of the ADM mass in terms of the Hawking mass of an outward-minimizing surface $\Sigma$, established by G. Huisken and T. Ilmanen \cite{HI}. Note that \eqref{eqn_HI} is well-known to imply the PMT.

Two major questions were left unsettled in \cite{J}. First, to what extent does this lower semicontinuity property of the ADM mass hold for weaker convergence than $C^2$? Subsequently the author and D. Lee proved in \cite{JL} that the theorem continues to hold if only pointed $C^0$ convergence is assumed. Second, does Theorem \ref{thm1} generalize to higher dimensions?  The primary concern of the present paper is to address the latter question.

Unfortunately, a bound directly analogous  to (\ref{eqn_HI})
is unknown beyond dimension three: Huisken--Ilmanen's proof in $n=3$ uses ``Geroch monotonicity'' of the Hawking mass, which crucially relies on the Gauss--Bonnet theorem in one  dimension lower. Generally, the missing link in establishing Theorem \ref{thm1} in higher dimensions has been a useful quantitative lower bound for the ADM mass in terms of the geometry of an outward-minimizing surface. Fortunately, a recent result of S. McCormick and P. Miao \cite{MM} provides such an estimate (see Theorem \ref{thm_MM} below) that is sufficient for our purposes. Their work uses the Riemannian Penrose inequality in higher dimensions, due to H. Bray and D. Lee \cite{BL} (which itself was a generalization of Bray's original proof in dimension three \cite{B}). Our main result is:

\begin{thm}
\label{thm2}
Theorem \ref{thm1} is true with ``3'' replaced by ``$n$\!'', where $3 \leq n \leq 7$, provided the limit $(N,h)$ is asymptotically Schwarzschild.
\end{thm}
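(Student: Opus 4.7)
The plan is to follow the strategy of the proof of Theorem \ref{thm1} in \cite{J}, with the McCormick--Miao bound (Theorem \ref{thm_MM}) taking the place of the Huisken--Ilmanen estimate \eqref{eqn_HI}. Write $F$ for the geometric quantity on outward-minimizing hypersurfaces furnished by Theorem \ref{thm_MM}, so that $m_{ADM}(M,g) \geq F(\Sigma)$ whenever $(M,g)$ is a complete asymptotically flat $n$-manifold with $3 \le n \le 7$ and nonnegative scalar curvature, and $\Sigma \subset M$ is a suitable outward-minimizing hypersurface. Set $m := m_{ADM}(N,h)$ and fix $\epsilon > 0$.

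First, because $(N,h)$ is asymptotically Schwarzschild, each coordinate sphere $\Sigma_R \subset N$ of large radius is $C^2$-close to a coordinate sphere of Schwarzschild mass $m$. Since $F$ evaluated on Schwarzschild coordinate spheres should equal $m$ exactly (essentially a rigidity consequence of the Riemannian Penrose inequality), one expects $F(\Sigma_R) \to m$ as $R \to \infty$. Choose $R$ so large that $F(\Sigma_R) > m - \epsilon$ and $\Sigma_R$ is strictly mean-convex and strictly outward-minimizing in $N$. By pointed $C^2$ Cheeger--Gromov convergence, there exist a precompact open $U \subset N$ containing $\Sigma_R$ and diffeomorphisms $\phi_i \colon U \to \phi_i(U) \subset M_i$ with $\phi_i^* g_i \to h$ in $C^2(U)$; setting $\Sigma_R^i := \phi_i(\Sigma_R)$ and using that $F$ depends continuously on the $C^2$ data of the surface and the ambient metric, one obtains $F(\Sigma_R^i) \to F(\Sigma_R)$.

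Next, $\Sigma_R^i$ need not be outward-minimizing in $M_i$, so we replace it by its outward-minimizing hull $\widetilde\Sigma_R^i \subset M_i$, which exists as a compact hypersurface thanks to the no-compact-minimal-surfaces hypothesis on $(M_i,g_i)$. For large $i$, $\Sigma_R^i$ lies arbitrarily $C^2$-close to $\Sigma_R$; since the latter is strictly outward-minimizing, one expects $\widetilde\Sigma_R^i$ to coincide with $\Sigma_R^i$ up to an error vanishing with $i$, so in particular $F(\widetilde\Sigma_R^i) \to F(\Sigma_R)$. Applying Theorem \ref{thm_MM} to $\widetilde\Sigma_R^i$ in $(M_i,g_i)$ then yields
\begin{equation*}
m - \epsilon \le F(\Sigma_R) = \lim_{i \to \infty} F(\widetilde\Sigma_R^i) \le \liminf_{i \to \infty} m_{ADM}(M_i, g_i),
\end{equation*}
and letting $\epsilon \to 0$ concludes the proof.

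The main obstacle I anticipate is the uniform control of the outward-minimizing hull operation in the previous paragraph. In dimension three with the Hawking mass, the analogous comparison was handled using properties of the weak inverse mean curvature flow; here, because the McCormick--Miao quantity arises from a conformal-Penrose-inequality argument rather than a geometric flow, a more direct surface comparison between $\Sigma_R^i$ and $\widetilde\Sigma_R^i$ is required, together with an argument that the relevant $C^2$ data passes to the hull. The restriction that $(N,h)$ be asymptotically Schwarzschild is likely invoked precisely to guarantee the clean asymptotic behavior $F(\Sigma_R) \to m$ on large coordinate spheres, which may fail under the weaker asymptotic flatness hypothesis.
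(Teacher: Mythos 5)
Your outline matches the paper's strategy: Lemma \ref{lemma_ADM_AS} supplies exactly the asymptotic statement $F_h(S_R)\to m_{ADM}(N,h)$ that you assume (and, as you guessed, this is precisely where the asymptotically Schwarzschild hypothesis enters), and the rest of the argument is the chain of inequalities you wrote down. But the step you flag as "the main obstacle" is in fact the crux of the proof, and your proposed resolution does not work as stated. You replace $\Sigma_R^i$ by its outward-minimizing hull $\widetilde\Sigma_R^i$ and claim (a) that the hull coincides with $\Sigma_R^i$ up to an error vanishing in $i$ because $\Sigma_R$ is strictly outward-minimizing in $N$, and (b) that $F(\widetilde\Sigma_R^i)\to F(\Sigma_R)$. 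For (a), strict outward-minimization of $\Sigma_R$ in $N$ is a statement about competitors in $N$, whereas the hull in $M_i$ is a global object: competitors can wander into regions of $M_i$ far outside $\Phi_i(U)$, where the geometry and topology are completely uncontrolled by the Cheeger--Gromov convergence (the $M_i$ need not even be diffeomorphic to $N$). Local $C^2$ closeness on $U$ gives no a priori control there. For (b), the hull is in general only $C^{1,1}$, while $F$ involves the induced scalar curvature, i.e.\ second-order data of the hypersurface; there is no reason $F$ is defined on, let alone continuous up to, the hull. Theorem \ref{thm_MM} also requires the boundary to satisfy $\min\rho>\frac{n-2}{n-1}\max H^2$, which you verify for $\Sigma_R^i$ but not for $\widetilde\Sigma_R^i$.

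The paper avoids both issues by proving that $\widetilde\Sigma_R^i=\Sigma_R^i$ exactly for $i$ large (Lemma \ref{lemma_tentacle}), so that no limit of $F$ over hulls is ever needed. The argument has three parts: the no-compact-minimal-surfaces hypothesis rules out components of the hull disjoint from $\Sigma_R^i$; the mean-curvature comparison principle (using strict mean convexity of the foliation by coordinate spheres $S_r$, $r\in[r_1,33r_1]$, transplanted to $M_i$) forces any component of the hull that stays inside the large coordinate ball to equal $\Sigma_R^i$; and a component that intersects $\Sigma_R^i$ but escapes the large ball is excluded by a quantitative area lower bound coming from the monotonicity formula for almost-minimizing currents in the Euclidean annulus, which contradicts the upper bound $|\widetilde\Sigma_R^i|\le|\Sigma_R^i|$. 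This last "tentacle" estimate is the piece genuinely missing from your proposal; without it, or a substitute, the application of Theorem \ref{thm_MM} is not justified.
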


Note that the Riemannian manifolds $(M_i,g_i)$ need not be asymptotically Schwarzschild even if their limit $(N,h)$ is.

The restriction in Theorem \ref{thm2} of $n \leq 7$ is primarily due to the fact that it is the highest dimension in which the Riemannian Penrose inequality is currently known. It was pointed out in \cite{BL} that even the positive mass theorem for $n \geq 8$ is insufficient to automatically extend the Riemannian Penrose inequality to $n \geq 8$. We strongly conjecture that the $n \leq 7$ restriction is unnecessary, and that the asymptotically Schwarzschild hypothesis can be replaced with asymptotic flatness --- see Remark \ref{rmk_HF}.

\begin{remark}
It is reasonable to attempt to extend Theorem \ref{thm1} to spin manifolds in higher dimensions using Witten's spinor technique in his proof of the PMT \cite{W}. However, as pointed out to the author by H. Bray, it is not clear how to make effective use of the hypothesis of no compact minimal surfaces in the spinor argument --- and it was shown in \cite{J} that  (\ref{eqn_LSC}) can  fail without this hypothesis.
\end{remark}

We also prove two other related results. Assuming weighted  $C^2$ convergence, we establish in lower semicontinuity  of the ADM mass in all dimensions $n \geq 3$ (Theorem \ref{thm_weighted} below). Weighted convergence gives global control on the asymptotics of the metrics, in contrast to pointed convergence. In this case, with a stronger hypothesis than in Theorems \ref{thm1} and \ref{thm2}, the absence of compact minimal surfaces is unnecessary and the proof is easier. However, the weighted result does not recover the positive mass theorem. Our proof is a generalization of a result of Y. Li, who addressed the problem for a convergent asymptotically flat Ricci flow \cite{Li}. The notation below is explained in Section \ref{sec_weighted}:

\begin{thm}
\label{thm_weighted}
Suppose a sequence $\{g^\ell\}_{\ell=1}^\infty$ converges to $g$ as asymptotically flat Riemannian metrics in \emph{$\Met_{-\tau}^2(M)$}, where $\tau> \frac{n-2}{2}$. 
%Assume that the scalar curvatures $R(g^\ell)$ and $R(g)$ of $g^\ell$ and $g$ are integrable for all $\ell$.
Then
\begin{align*}
\lim_{\ell \to \infty} &\left(m_{ADM}(M,g^\ell) -  \frac{1}{2(n-1) \omega_{n-1}} \int_M R(g^\ell) dV_{g^\ell}\right)\\
&\qquad\qquad\qquad\qquad \qquad\qquad= m_{ADM}(M,g) - \frac{1}{2(n-1) \omega_{n-1}} \int_M R(g) dV_{g},
\end{align*}
where $dV_{g^\ell}$ and $dV_g$ are the volume measures of $g^\ell$ and $g$. Moreover, if $R(g^\ell) \geq 0$ for all $\ell$, then
$$\liminf_{\ell \to \infty} m_{ADM}(M,g_\ell) \geq m_{ADM}(M,g).$$ 
\end{thm}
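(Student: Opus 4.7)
The plan is to derive an integral identity expressing the difference $m_{ADM}(g) - \frac{1}{2(n-1)\omega_{n-1}} \int_M R(g)\, dV_g$ as a sum of terms that are each manifestly continuous in the weighted $C^2_{-\tau}$ topology, and then to deduce the inequality part by Fatou's lemma. On each asymptotic end, write $g_{ij} = \delta_{ij} + h_{ij}$ with $|\partial^\alpha h_{ij}| = O(|x|^{-\tau - |\alpha|})$ for $|\alpha| \leq 2$, and decompose
$$R(g) = R_{\mathrm{lin}}(h) + R_{\mathrm{nl}}(h,\partial h,\partial^2 h),$$
where $R_{\mathrm{lin}}(h) = \partial_j(\partial_i h_{ij} - \partial_j h_{ii})$ is a Euclidean divergence and $R_{\mathrm{nl}}$ collects the at-least-quadratic terms. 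Similarly expand $\sqrt{\det g} = 1 + O(|x|^{-\tau})$. Integrating $R_{\mathrm{lin}}(h)$ over the exterior region $\{|x|\geq r_0\}$ and applying Stokes' theorem, the flux at infinity recovers $2(n-1)\omega_{n-1}\, m_{ADM}(g)$. After bookkeeping the difference between $dx$ and $dV_g$ on the end and summing over ends (if there are multiple), this yields an identity
$$2(n-1)\omega_{n-1}\, m_{ADM}(g) - \int_M R(g)\, dV_g = \mathcal{Q}(g),$$
where $\mathcal{Q}(g)$ is the sum of (i) a bulk integral on the end of schematic quadratic-or-higher combinations such as $(\partial h)^2$, $h\, \partial^2 h$, and $R_{\mathrm{lin}}(h)\,(\sqrt{\det g} - 1)$, each pointwise dominated by a multiple of $|x|^{-2\tau - 2}$, (ii) a boundary integral on $\{|x|=r_0\}$ in the first derivatives of $g$, and (iii) an interior integral of $R(g)\, dV_g$ on $\{|x|\leq r_0\}$.

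Continuity of $\mathcal{Q}$ in the weighted $C^2_{-\tau}$ topology then follows from two observations. The end-integrand in (i) is pointwise bounded by $C|x|^{-2\tau - 2}$, which is Lebesgue integrable on $\mathbb{R}^n$ precisely when $2\tau + 2 > n$, i.e., under the hypothesis $\tau > (n-2)/2$. Weighted convergence $\|g^\ell - g\|_{C^2_{-\tau}} \to 0$ furnishes a uniform integrable majorant, so dominated convergence gives convergence of (i); the contributions (ii) and (iii) converge by plain $C^2$ convergence on the compact region $\{|x|\leq r_0\}$. This establishes the equality statement.

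For the inequality under $R(g^\ell) \geq 0$, write
$$m_{ADM}(g^\ell) = \frac{1}{2(n-1)\omega_{n-1}} \int_M R(g^\ell)\, dV_{g^\ell} + \left[m_{ADM}(g^\ell) - \frac{1}{2(n-1)\omega_{n-1}} \int_M R(g^\ell)\, dV_{g^\ell}\right],$$
where the bracketed quantity converges, by the equality statement, to $m_{ADM}(g) - \frac{1}{2(n-1)\omega_{n-1}} \int_M R(g)\, dV_g$. Weighted $C^2$ convergence yields pointwise convergence $R(g^\ell)\sqrt{\det g^\ell} \to R(g)\sqrt{\det g}$ of nonnegative functions, so Fatou's lemma gives $\liminf_{\ell} \int_M R(g^\ell)\, dV_{g^\ell} \geq \int_M R(g)\, dV_g$. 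Combining these yields $\liminf_{\ell} m_{ADM}(g^\ell) \geq m_{ADM}(g)$. The main technical obstacle I anticipate is the careful accounting of the coordinate expansion of $R(g)$ and $\sqrt{\det g}$ to verify that every nonlinear term in $\mathcal{Q}$ has decay rate $|x|^{-2\tau - 2}$ or better, so that $\tau > (n-2)/2$ emerges as the sharp integrability threshold for this approach and cannot be relaxed.
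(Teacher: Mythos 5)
Your proposal is correct and follows essentially the same route as the paper: your $R_{\mathrm{lin}}(h)=\partial_j(\partial_i h_{ij}-\partial_j h_{ii})$ is exactly the operator $\cD(g)$ used there, the key point in both arguments is that the quadratic remainder $\cD(g)-R(g)$ (together with the $\sqrt{\det g}-1$ correction) is uniformly $O(|x|^{-2\tau-2})$ and hence integrable precisely because $\tau>\frac{n-2}{2}$, so dominated convergence gives the equality, and Fatou's lemma applied to $R(g^\ell)\,dV_{g^\ell}\geq 0$ gives the inequality. Your direct superadditivity-of-$\liminf$ step even streamlines the paper's subsequence argument slightly, but the substance is identical.
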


Finally, it was suggested by E. Woolgar that the author investigate the lower semicontinuity of ``mass'' in dimension two. This is carried out in Section \ref{sec_2D} for pointed $C^2$ convergence, where the asymptotic cone angle replaces the ADM mass; see Theorem \ref{thm_cone}.

\begin{ack}
The author thanks J. Corvino, D. Lee, S. McCormick, and P. Miao for helpful discussions.
The author acknowledges support from the Erwin Schr\"odinger Institute, at which a portion of this work was completed in 2017.
\end{ack}

\section{Motivation and examples}
\label{sec_examples}

In this section we describe several examples to motivate the lower semicontinuity phenomenon for the ADM mass.

\subsection{Lower semicontinuity of mass in Newtonian gravity}
\label{sec_NG}
We begin here with a general discussion of why lower semicontinuity of the total mass is plausible from the point of view of Newtonian gravity. Consider a matter distribution on $\R^n$ described by a continuous, integrable mass density function $\rho \geq 0$. The total Newtonian mass is simply given by the integral:
$$m(\rho) = \int_{\R^n} \rho dx^1\ldots dx^n.$$
Now, if $\{\rho_i\}_{i=1}^\infty$ is a sequence of such matter distributions that converges pointwise to $\rho$, then by Fatou's lemma,
$$\liminf_{i \to \infty} m(\rho_i) \geq m(\rho).$$
Any drop in the total Newtonian mass can be viewed as mass escaping out to infinity in the limit. Such an argument does not apply to the context of general relativity, because the ADM mass is not known (or expected) to be given as the integral of a locally defined, nonnegative, geometric/physical quantity. 

Convergence of $\rho_i$ to $\rho$ in Newtonian gravity is analogous to $C^2$ convergence of the Riemannian metrics in general relativity, as the scalar curvature represents energy density and is given by two derivatives of the metric. The $C^0$ convergence in \cite{JL} can then be viewed as a general relativistic analog of convergence of the Newtonian gravitational potentials $u_i \to u$, where $\Delta u_i = 4\pi\rho_i$ and $\Delta u = 4\pi\rho$.

\subsection{Blow-up example}
\label{sec_blowup}
In \cite{J} the author gave the example of a fixed asymptotically flat $n$-manifold $(M,g)$ of nonnegative scalar curvature and considered the sequence of homothetic rescalings $\{(M,i^2 g, p)\}$ for $p \in M$ fixed and $i = 1, 2, \ldots$. This sequence converges in the pointed $C^2$ Cheeger--Gromov sense to Euclidean $\R^n$ (which has zero ADM mass), and indeed the statement of lower semicontinuity of mass implies that the ADM mass of $(M,g)$ is nonnegative. In other words, the positive mass theorem is recovered.

The example in section \ref{sec_NG} suggests that from a Newtonian point of view, the mass-drop phenomenon can be completely accounted for by matter escaping off to infinity. But by choosing $(M,g)$ here to be scalar-flat (i.e., vacuum) with positive ADM mass, the example of $\{(M,i^2 g, p)\}$ converging to Euclidean space shows that the mass can drop by an infinite amount in the limit with no matter fields present.  This can be interpreted as the energy of the gravitational field escaping to infinity.

\subsection{Escaping point example}
\label{sec_escape}
Similar to the previous example, begin with a fixed asymptotically flat $n$-manifold $(M,g)$. Now consider a sequence of points $\{p_i\}$ in $M$ escaping to infinity. By asymptotic flatness, the sequence $\{(M,g,p_i)\}$ converges in the pointed $C^2$ Cheeger--Gromov sense to Euclidean $\R^n$. Again the statement of lower semicontinuity of ADM mass here recovers the positive mass theorem; and again by choosing $(M,g)$ to be scalar-flat with positive ADM mass we can interpret the mass drop as gravitational energy escaping to infinity.

\subsection{Lower semicontinuity of mass and Ricci flow}

To the author's knowledge, the ADM mass drop phenomenon under pointed convergence was first observed by T. Oliynyk and E. Woolgar in their study of Ricci flow on rotationally symmetric, asymptotically flat spaces \cite{OW}; see also the work of X. Dai and L. Ma, who first showed that the ADM mass is constant along Ricci flow, thereby arguing an asymptotically flat Ricci flow cannot converge uniformly to Euclidean space \cite{DM}.  
Under natural hypotheses, Oliynyk--Woolgar proved the long-time existence of Ricci flow on asymptotically flat, rotationally symmetric spaces, with pointed $C^k$ Cheeger--Gromov convergence to Euclidean space as $t \to \infty$. Moreover, the ADM mass is not only monotone but is in fact constant along the Ricci flow. In particular, if the initial space has positive ADM mass, then the ADM mass must drop to zero in the limit.

In light of this discussion, the author suggested in \cite{J} that using Theorem \ref{thm1} (or its higher dimensional analog) would be necessary in any proof of the PMT that involved convergence of the Ricci flow to Euclidean space. Since Theorem \ref{thm1} already subsumes the PMT, this seemed to suggest than an independent Ricci flow proof of the PMT was unlikely. Nevertheless, such a proof has very recently been given by Y. Li in \cite{Li}. His argument circumvents this apparent circular logic by establishing lower semicontinuity of the ADM mass directly for the case of a convergent Ricci flow (i.e., the technique does not apply to general pointed $C^2$ Cheeger--Gromov convergence). We generalize Li's argument to weighted $C^2$ convergence in Section \ref{sec_weighted}.

\section{Background}
\label{sec_background}

We begin with the definition of an asymptotically flat manifold (with one end). Many slight variants appear in the literature; the version below is commonly used.

\begin{definition}
A smooth, connected Riemannian $n$-manifold $(M,g)$, with $n \geq 3$, possibly with compact boundary, is \textbf{asymptotically flat (AF)} if there exists a compact set $K \subset M$ and a diffeomorphism
$\Phi: M \setminus K \to \R^n \setminus B$, for a closed ball $B$, such that in the ``asymptotically flat'' coordinates $x=(x^1, \ldots, x^n)$ given by $\Phi$, we have
\begin{equation}
\label{decay}
g_{ij} = \delta_{ij} + O(|x|^{-\tau}), \qquad \partial_kg_{ij} =  O(|x|^{-\tau-1}), \qquad \partial_k\partial_\ell g_{ij} = O(|x|^{-\tau-2}),
\end{equation}
for some constant $\tau>\frac{n-2}{2}$ (the \textbf{order}), and the scalar curvature of $g$ is integrable.
(Indices $i,j,k,\ell$ above run from $1$ to $n$, and $\partial$ denotes partial differentiation in the coordinate chart.)
\end{definition}

For example, for a real number $m>0$, the Schwarzschild metric
$$g_{ij} = \left(1+ \frac{m}{2|x|^{n-2}}\right)^{\frac{4}{n-2}} \delta_{ij}$$
on $\R^n$ minus a ball about the origin is asymptotically flat of order $n-2$.

We will also need two classes of of asymptotically flat manifolds with more restricted asymptotics at infinity:
\begin{definition}
An asymptotically flat Riemannian $n$-manifold $(M,g)$ is \textbf{asymptotically Schwarzschild} if there exists an ``asymptotically Schwarzschild coordinate system'' $(x^1, \ldots, x^n)$ on $M \setminus K$, i.e.
\begin{equation}
\label{eqn_gh}
g_{ij} =  \left(1+ \frac{m}{2|x|^{n-2}}\right)^{\frac{4}{n-2}} \delta_{ij} + h_{ij},
\end{equation}
for some real constant $m$, where
\begin{equation}
\label{decay2}
h_{ij}=O(|x|^{1-n}), \qquad \partial_k h_{ij} =  O(|x|^{-n}), \qquad \partial_k\partial_\ell h_{ij} = O(|x|^{-n-1}).
\end{equation}
\end{definition}
%Note than an asymptotically Schwarzschild Riemannian $n$-manifold automatically has scalar curvature $O(r^***)$ and is moreover
%AF of order $n-2$.

Note that an asymptotically Schwarzschild Riemannian $n$-manifold is AF of order $n-2$.

\begin{definition}
\label{def_HF}
An asymptotically flat Riemannian $n$-manifold $(M,g)$ is \textbf{harmonically flat at infinity (HF)} if there exists a ``harmonically flat coordinate system'' $(x^1, \ldots, x^n)$ on $M \setminus K$, i.e.
$$g_{ij} = U^{\frac{4}{n-2}} \delta_{ij},$$
on $M \setminus K$ for some function $U$, where $\Delta U = 0$ and $U(x) \to 1$ as $|x| \to \infty$. (Here $\Delta$ is the Euclidean Laplacian on $\R^n$.)
\end{definition}

It is well-known that the harmonic function $U$ appearing in Definition \ref{def_HF} admits an expansion at infinity of the form:
\begin{equation}
\label{eqn_U}
U(x) = 1 + \frac{a}{|x|^{n-2}} + O_\infty (|x|^{-n+1}),
\end{equation}
where the notation $O_k(|x|^{\ell})$ denotes an expression that is $O(|x|^{\ell})$ for $|x|$ large and 
for which the $\gamma$th partial derivative ($\gamma$ being a multi-index with $|\gamma| \leq k$) is $O(|x|^{\ell-|\gamma|})$.
The fact that $\Delta U=0$ implies that $g$ as above has zero scalar curvature outside of $K$. Note that HF manifolds are necessarily asymptotically Schwarzschild, and that the Schwarzschild metric itself is HF.

Next, we recall the definition of ADM mass.
\begin{definition}
The \textbf{ADM mass} \cite{ADM} (cf. \cites{Ba0,Chr}) of an asymptotically flat manifold $(M,g)$ of dimension $n$ is the real number
$$m_{ADM}(M,g) = \frac{1}{2(n-1)\omega_{n-1}} \lim_{r \to \infty} \int_{S_r} \sum_{i,j=1}^n\left( \partial_ig_{ij} - \partial_j g_{ii}\right)\frac{x^j}{r} dA ,$$
where $dA$ is the induced volume form on the coordinate sphere $S_r=\{|x|=r\}$ with respect to the Riemannian metric $\delta_{ij}$, all in an AF coordinate chart.
\end{definition}

%For later reference, we now describe an important class of  AF manifolds: those which are harmonically flat at infinity.

It is straightforward to verify that for an HF manifold, the ADM mass is given by the value $2a$, where $a$ is the constant appearing in (\ref{eqn_U}), and for an asymptotically Schwarzschild manifold, the ADM mass is given by the constant $m$ appearing in \eqref{decay2}.

\medskip

Recall that if $(M,g)$ is asymptotically flat with boundary $\partial M$, then we say $\partial M$ is \emph{outward-minimizing} if
$$|S| \geq |\partial M|$$
for all surfaces $S$ enclosing $\partial M$, where $|\cdot|$ denotes the hypersurface area (with respect to $g$). The following theorem was recently proved by McCormick and Miao \cite{MM}.

\begin{thm}[\cite{MM}]
\label{thm_MM}
Let $(M,g)$ be an AF manifold of dimension $3 \leq n \leq 7$, with compact, connected boundary $\Sigma$ that is outward-minimizing. Assume that the scalar curvature of $(M,g)$ is nonnegative. Let $H \geq 0$ be the mean curvature of $\Sigma$ (in the direction pointing into $M$), let $\rho$ be the scalar curvature of $\Sigma$ with respect to the induced Riemannian metric, and suppose that
$$\min_{\Sigma} \rho > \frac{n-2}{n-1} \max_\Sigma H^2.$$
Then
\begin{equation}
\label{eqn_MM}
m_{ADM}(M,g) \geq  \frac{1}{2} \left(\frac{|\Sigma|}{\omega_{n-1}}\right)^{\frac{n-2}{n-1}}\left(1 - \frac{n-2}{n-1}  \frac{\max_\Sigma H^2}{\min_\Sigma \rho}  \right).
\end{equation}
\end{thm}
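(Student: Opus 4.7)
The plan is to conformally deform $g$ to a new asymptotically flat metric $\tilde g$ on $M$ in which $\Sigma$ becomes a minimal surface, and then apply the Bray--Lee Riemannian Penrose inequality to $(M,\tilde g)$. Specifically, take $\tilde g = u^{4/(n-2)} g$, where $u$ solves the mixed boundary value problem
\begin{equation*}
\Delta_g u = 0 \text{ in } \interior M, \qquad \partial_\nu u + \tfrac{n-2}{2(n-1)} H u = 0 \text{ on } \Sigma, \qquad u \to 1 \text{ at infinity,}
\end{equation*}
with $\nu$ the unit normal to $\Sigma$ pointing into $M$. Since $H \geq 0$, Fredholm theory in weighted H\"older spaces combined with the maximum principle produces a unique positive solution with $u \leq 1$ admitting an expansion $u(x) = 1 + A|x|^{2-n} + O(|x|^{1-n})$ at infinity. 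The harmonicity of $u$ together with $R_g \geq 0$ yields $R_{\tilde g} \geq 0$ via the conformal Laplacian identity, and the Robin-type boundary condition is chosen precisely so that the mean curvature of $\Sigma$ in $\tilde g$ vanishes identically.

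Next I would verify that $\Sigma$ is enclosed by (or equals) the outermost minimal surface in $(M, \tilde g)$, using the outward-minimizing hypothesis for $\Sigma$ in $g$ and the bound $u \leq 1$ to rule out a strictly enclosing $\tilde g$-minimal surface. The Bray--Lee Penrose inequality, known precisely in the regime $3 \leq n \leq 7$, then yields
\begin{equation*}
\madm(\tilde g) \geq \tfrac{1}{2}\left(|\Sigma|_{\tilde g}/\omega_{n-1}\right)^{(n-2)/(n-1)}.
\end{equation*}
Two standard bookkeeping calculations translate this back to $g$: the mass formula under a conformal change asymptotic to the identity gives $\madm(\tilde g) = \madm(g) + 2A$, and Green's identity applied to the harmonic $u$ on $M$ together with the boundary condition extracts the representation $A = \frac{1}{2(n-1)\omega_{n-1}} \int_\Sigma H u\, dA_g$, which is nonnegative since $H \geq 0$ and $u > 0$.

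The heart of the argument, and likely the main obstacle, is producing a sharp pointwise lower bound $u|_\Sigma \geq \eta > 0$ with $\eta$ depending only on the ratio $\max_\Sigma H^2/\min_\Sigma \rho$; this is where the hypothesis $\min_\Sigma \rho > \frac{n-2}{n-1}\max_\Sigma H^2$ must enter. I would attempt to construct an explicit (possibly constant) subsolution of the mixed boundary value problem, drawing on the Gauss equation for $\Sigma \subset (M,g)$ to convert the scalar-curvature gap $\rho - \frac{n-2}{n-1} H^2 > 0$ into coercivity of the boundary operator $\partial_\nu + \frac{n-2}{2(n-1)} H$; a Lichnerowicz-type manipulation on $\Sigma$ may be needed to extract the precise constants. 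Once $\eta$ is in hand, the pointwise estimate $|\Sigma|_{\tilde g} \geq \eta^{2(n-1)/(n-2)} |\Sigma|_g$ combined with the Penrose inequality and the relation $\madm(g) = \madm(\tilde g) - 2A$ yields \eqref{eqn_MM} after rearrangement, with the correction factor $1 - \frac{n-2}{n-1}\max H^2/\min\rho$ emerging from balancing the area loss in the conformal change against the mass contribution $2A$ picked up at infinity.
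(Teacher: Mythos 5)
This is Theorem \ref{thm_MM}, which the paper imports from McCormick--Miao \cite{MM} without proof, so the comparison must be with their argument. Your top-level idea --- reduce to the Bray--Lee Riemannian Penrose inequality by modifying the metric so that $\Sigma$ becomes a minimal boundary --- is the right one and is shared by \cite{MM}, but your mechanism for doing so cannot work. First, a sign error: for the Robin problem $\partial_\nu u = -\tfrac{n-2}{2(n-1)}Hu$ with $\nu$ pointing into $M$ and $H \ge 0$, the Hopf lemma applied at a putative boundary minimum forces $u \ge 1$, not $u \le 1$ (a minimum of $u$ on $\Sigma$ below $1$ would give $\partial_\nu u > 0$ there, contradicting the boundary condition). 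Second, and decisively, the bookkeeping cannot be closed: for the round sphere $|x| = r_0$ in flat $\R^n$ one computes $u = 1 + (r_0/|x|)^{n-2}$, so $u|_\Sigma = 2$, $2A = 2r_0^{n-2}$, and $\tilde g$ is exactly exterior Schwarzschild of mass $2r_0^{n-2}$; the Penrose bound for $\tilde g$ equals $2A$ on the nose and the method returns only $\madm(g) \ge 0$. Since \eqref{eqn_MM} is asymptotically \emph{sharp} for large coordinate spheres in Schwarzschild (Lemma \ref{lemma_ADM_HF}), any proof along your lines would have to evaluate the cancellation between $2A \sim 2r^{n-2}$ and $\tfrac12(|\Sigma|_{\tilde g}/\omega_{n-1})^{(n-2)/(n-1)} \sim 2r^{n-2}$ to relative accuracy $O(r^{-(n-2)})$, i.e.\ it needs sub-leading information about $u$ that depends on the global geometry of $(M,g)$. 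No pointwise bound $u|_\Sigma \ge \eta(\max H^2/\min\rho)$ can supply this, and indeed the hypothesis on $\rho$ never enters your construction: the Robin operator sees only $H$, and the Gauss equation does not convert the gap $\rho - \tfrac{n-2}{n-1}H^2 > 0$ into any coercivity for this boundary value problem.

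What \cite{MM} actually does is attach to $\Sigma$ an explicit collar
$$\Bigl([r_0,1]\times\Sigma,\ \ \frac{(n-1)(n-2)}{\min_\Sigma \rho}\cdot\frac{dt^2}{1-(r_0/t)^{n-2}} + t^2\, g_\Sigma\Bigr), \qquad r_0^{n-2} = 1 - \frac{n-2}{n-1}\cdot\frac{\max_\Sigma H^2}{\min_\Sigma \rho},$$
where $g_\Sigma$ is the induced metric on $\Sigma$. The hypothesis $\min\rho > \tfrac{n-2}{n-1}\max H^2$ enters precisely here: it makes $r_0 > 0$, it guarantees the collar has nonnegative scalar curvature (this uses $\min_\Sigma\rho$), and the mean curvature of the slice $\{t=1\}$ computed from the collar side equals $\max_\Sigma H \ge H$, which is the correct sign of the mean-curvature jump for the distributional scalar curvature at the corner to be nonnegative. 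The inner boundary $\{t=r_0\}\times\Sigma$ is minimal with area $r_0^{n-1}|\Sigma|$, and the corner version of the Bray--Lee inequality (via Miao-type smoothing, together with the outward-minimizing hypothesis to ensure the new horizon is outer-minimizing) yields $\madm \ge \tfrac12 r_0^{n-2}(|\Sigma|/\omega_{n-1})^{(n-2)/(n-1)}$, which is exactly \eqref{eqn_MM}. So the correction factor is the geometric quantity $r_0^{n-2}$ of an explicit fill-in, not the outcome of a balance between a conformal area change and a mass shift at infinity.
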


To simplify notation later, we make the following definition.

\begin{definition}
\label{def_F_g}
Let $S$ be a smooth, compact hypersurface in a Riemannian manifold $(M,g)$ of dimension $n \geq 3$. Define
$$F_g(S) =  \frac{1}{2} \left(\frac{|S|}{\omega_{n-1}}\right)^{\frac{n-2}{n-1}}\left(1 - \frac{n-2}{n-1} \cdot  \frac{\max_S H^2}{\min_S \rho}  \right)$$
where $|S|$, $H$, and $\rho$ are the area, mean curvature, and scalar curvature of $\Sigma$ with respect to the Riemannian metric induced by $g$.
\end{definition}

%\begin{remark}
%\label{rmk_cont}
%For later reference we note that $F_g(S)$ varies continuously with respect to $C^2$ perturbations of $g$ on any neighborhood of $S$, since the area, mean curvature, and scalar curvature depend continuously on $g$ and its first and second derivatives.
%\end{remark}

We conclude this section with the definition of convergence used in Theorems \ref{thm1} and \ref{thm2}.
\begin{definition}
\label{def_CG}
Fix a nonnegative integer $\ell$. A sequence of complete, connected, pointed Riemannian $n$-manifolds $(M_i,g_i,p_i)$ converges in the \textbf{pointed $C^{\ell}$ Cheeger--Gromov sense} to a complete, connected, pointed Riemannian $n$-manifold $(N,h,q)$ if for every $r > 0$ there exists a domain $\Omega$ containing the metric ball $B_h(q,r)$ in $(N,h)$, and there exist (for all $i$ sufficiently large) smooth embeddings $\Phi_i: \Omega \to M_i$ such that $\Phi_i(\Omega)$ contains the metric ball $B_{g_i}(p_i,r)$, and the Riemannian metrics $\Phi_i^* g_i$ converge in $C^{\ell}$ norm to $h$ as tensors on $\Omega$.
\end{definition}

Note that no $M_i$ need be diffeomorphic to $N$ in the above definition, and that the asymptotics of $M_i$ can be wildly different from those of $N$ in the noncompact case.

\section{The mass of asymptotically Schwarzschild metrics}
\label{sec_HF}

In this section we prove that the ADM mass of an asymptotically Schwarzschild manifold can be recovered from the $r \to \infty$ limit of the expression $F_g(S_r)$, a key ingredient in the proof of Theorem \ref{thm2}. Before doing so (in Lemma \ref{lemma_ADM_AS}), we first verify this for HF metrics in Lemma \ref{lemma_ADM_HF}.

\begin{remark}
For an asymptotically flat manifold $(M,g)$ of dimension $3\leq n \leq 7$, the inequality
$$m_{ADM}(M,g) \geq \limsup_{r \to \infty} F_g(S_r)$$
follows from Theorem \ref{thm_MM}. However, equality need not hold. Such an example, pointed out to the author by S. McCormick, can be found by considering an AF manifold $(M,g)$ of nonnegative scalar curvature and strictly positive ADM mass that contains an isometric copy of half of a Euclidean space. Such spaces were constructed by Carlotto and Schoen \cite{CS}. For $r$ sufficiently large, $S_r$ intersects the Euclidean region in $M$, which gives $F_g(S_r) \leq 0$.
\end{remark}

\begin{lemma}
\label{lemma_ADM_HF}
If $(M,g)$ is an HF manifold, then
\begin{equation}
\label{eqn_F_g_limit}
m_{ADM}(M,g) = \lim_{r \to \infty} F_g(S_r),
\end{equation}
where $F_g$ is given in Definition \ref{def_F_g}, and $S_r$ is the coordinate sphere $\{|x|=r\}$ in a harmonically flat coordinate system.
\end{lemma}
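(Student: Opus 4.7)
The plan is to exploit the explicit conformal flatness of an HF metric, $g = U^{4/(n-2)}\delta$, and Taylor expand each ingredient of $F_g(S_r)$ using the asymptotic expansion $U(x) = 1 + a|x|^{-(n-2)} + O_\infty(|x|^{-(n-1)})$, with the goal of recovering the known identity $m_{ADM}(M,g) = 2a$. Note that naively passing to the limit in $F_g(S_r)$ produces an indeterminate form: the factor $\left(|S_r|_g/\omega_{n-1}\right)^{(n-2)/(n-1)}$ grows like $r^{n-2}$, while $H^2/\rho \to (n-1)/(n-2)$ on the round sphere forces $1 - \tfrac{n-2}{n-1}(\max H^2/\min\rho) \to 0$. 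The key is therefore to expand each factor one order past its leading behavior.

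For the area factor, I would substitute $dA_g = U^{2(n-1)/(n-2)} r^{n-1} d\sigma$ (with $d\sigma$ the unit sphere measure) and expand $U^{2(n-1)/(n-2)} = 1 + \tfrac{2(n-1)}{n-2} a\, r^{-(n-2)} + O(r^{-(n-1)})$, giving $|S_r|_g = \omega_{n-1} r^{n-1} + \tfrac{2(n-1)}{n-2} a\, \omega_{n-1}\, r + O(1)$, hence
\[
\frac{1}{2}\!\left(\frac{|S_r|_g}{\omega_{n-1}}\right)^{\!(n-2)/(n-1)} = \frac{r^{n-2}}{2} + a + o(1).
\]
For $H$, I would apply the standard conformal-change formula $H = e^{-\varphi}(H_\delta + (n-1)\partial_\nu \varphi)$ with $\varphi = \tfrac{2}{n-2}\log U$, $H_\delta = (n-1)/r$, and $\nu = \partial_r$; collecting the two contributions of order $r^{-(n-1)}$ gives
\[
H = \frac{n-1}{r} - \frac{2(n-1)^2}{n-2}\cdot\frac{a}{r^{n-1}} + O(r^{-n}),\qquad H^2 = \frac{(n-1)^2}{r^2} - \frac{4(n-1)^3 a}{(n-2)\, r^n} + O(r^{-(n+1)}).
\]
For $\rho$, the induced metric on $S_r$ is $U^{4/(n-2)}$ times the round metric $r^2 d\sigma^2$ on $S^{n-1}$; applying the conformal-change formula for scalar curvature in dimension $n-1$ and using that the spherically symmetric leading term $a/r^{n-2}$ has vanishing spherical Laplacian (so the $\Delta$ and $|\nabla|^2$ terms in the formula are forced into higher order by the $O_\infty(|x|^{-(n-1)})$ remainder in $U$), one obtains
\[
\rho = \frac{(n-1)(n-2)}{r^2} - \frac{4(n-1) a}{r^n} + O(r^{-(n+1)}).
\]

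Dividing and simplifying then yields $\frac{n-2}{n-1}\cdot \frac{H^2}{\rho} = 1 - \frac{4a}{r^{n-2}} + O(r^{-(n-1)})$, so
\[
1 - \frac{n-2}{n-1}\cdot\frac{\max_{S_r} H^2}{\min_{S_r} \rho} = \frac{4a}{r^{n-2}} + O(r^{-(n-1)}),
\]
and multiplying by the area factor gives
\[
F_g(S_r) = \left(\frac{r^{n-2}}{2} + a + o(1)\right)\!\left(\frac{4a}{r^{n-2}} + O(r^{-(n-1)})\right) = 2a + o(1) \longrightarrow m_{ADM}(M,g).
\]

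The one genuine subtlety is the use of $\max H^2$ and $\min \rho$ in place of pointwise values: the $O_\infty(|x|^{-(n-1)})$ remainder in $U$ can depend nontrivially on the angular variables, so $H^2$ and $\rho$ are generally not constant on $S_r$. The saving point, which I would verify explicitly by tracking derivatives through the conformal-change formulas, is that this angular variation contributes only at order $O(r^{-(n+1)})$ to each of $H^2$ and $\rho$, strictly smaller than the $O(r^{-n})$ corrections coming from the spherically symmetric piece $a/r^{n-2}$. Consequently $\max H^2 - \min H^2$ and $\max \rho - \min \rho$ are absorbed into the error $O(r^{-(n-1)})$ above, and the computation is unaffected. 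This bookkeeping is the step I expect to require the most care in a formal write-up.
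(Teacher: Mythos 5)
Your proposal is correct and follows essentially the same route as the paper: expand $|S_r|_g$, $H$, and $\rho$ one order past leading using the conformal-change formulas and the expansion of $U$, then observe the cancellation that leaves $F_g(S_r)=2a+o(1)$. The only cosmetic difference is in controlling the $\Delta_{S_r}U$ term in the scalar curvature formula --- the paper uses harmonicity of $U$ together with cancellation of the normal Hessian and $H\,\partial_\nu U$ contributions, while you use that the leading term $a/r^{n-2}$ is constant on $S_r$ so only the $O_\infty(r^{-(n-1)})$ remainder contributes --- and both give $O(r^{-n-1})$; your explicit treatment of the $\max H^2/\min\rho$ versus pointwise issue is a point the paper leaves implicit.
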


Except for the calculations \eqref{F_g} at the end of the following proof, the proof of Lemma \ref{lemma_ADM_AS} will be independent of Lemma \ref{lemma_ADM_HF}.

\begin{proof}
The proof involves straightforward computations of the asymptotic behavior, for large $r$, of the area, mean curvature, and scalar curvature of $S_r$. Let $U$ be the harmonic function as in Definition \ref{def_HF}, with expansion (\ref{eqn_U}). 

First we compute the area of $S_r$:
\begin{align*}
|S_r|_{g} &= \int_{S_r} U^{\frac{2(n-1)}{n-2}} dA\\
&= \int_{S_r} \left(1 + \frac{2a(n-1)}{(n-2)r^{n-2}} + O(r^{1-n})\right) dA\\
&= \omega_{n-1} r^{n-1}\left(1+ \frac{2a(n-1)}{(n-2)r^{n-2}} \right) + O(1),
\end{align*}
where $dA$ is the area form on $S_r$ induced by $\delta$.
In particular,
\begin{align*}
\frac{1}{2} \left(\frac{|S_r|_{g}}{\omega_{n-1}}\right)^{\frac{n-2}{n-1}} &= \frac{1}{2} r^{n-2}\left(1+ \frac{2a}{r^{n-2}} \right) + O(r^{-1}).
\end{align*}

Second we compute the mean curvature. Recall that the mean curvature of $S_r$ with respect to $\delta_{ij}$ is $\frac{n-1}{r}$. From a well-known formula relating the mean curvatures of conformally related Riemannian metrics, letting $H_r$ represent the mean curvature of $S_r$ with respect to $g$, we have
\begin{align*}
H_r &= U^{-\frac{2}{n-2}} \cdot \frac{n-1}{r} + \frac{2(n-1)}{n-2} \cdot U^{-\frac{n}{n-2}}\cdot \nu(U)\\
&= \left(1 + \frac{a}{r^{n-2}} + O(r^{1-n})\right)^{-\frac{2}{n-2}} \cdot \frac{n-1}{r}\\
&\qquad + \frac{2(n-1)}{n-2} \left(1 + \frac{a}{r^{n-2}} + O(r^{1-n})\right)^{-\frac{n}{n-2}} \left( -\frac{a(n-2)}{r^{n-1}} + O(r^{-n})\right)\\
&= \left(1 - \frac{2a}{(n-2)r^{n-2}} + O(r^{1-n})\right)\cdot \frac{n-1}{r}\\
&\qquad + \frac{2(n-1)}{n-2} \left(1 - \frac{an}{(n-2)r^{n-2}} + O(r^{1-n})\right) \left( -\frac{a(n-2)}{r^{n-1}} + O(r^{-n})\right)\\
&=\frac{n-1}{r}  -\frac{2a(n-1)^2}{(n-2)r^{n-1}} +O(r^{-n}),
\end{align*}
where we used the fact that the $\delta$-unit normal $\nu$ to $S_r$ equals $\frac{\partial}{\partial r}$.
Thus,
\begin{align*}
H_r^2 &=\frac{(n-1)^2}{r^2} - \frac{4a(n-1)^3}{(n-2)r^n} + O(r^{-n-1}).
\end{align*}

Third, we compute the scalar curvature of $S_r$ with respect to $g|_{TS_r}$. Recall that if $g_2=e^{2\psi}g_1$ are conformally related Riemannian metrics on a manifold of dimension $n-1$, then their scalar curvatures are related by
$$R_{g_2} = e^{-2\psi}\left(R_{g_1} - 2(n-2) \Delta_{g_1} \psi - (n-3)(n-2)|d\psi|^2_{g_1}  \right).$$
In particular, with $g_2 = g|_{TS_r}$, $g_1=\delta|_{TS_r}$, and $U^{\frac{4}{n-2}} = e^{2\psi}$ on $S_r$, we have
\begin{align*}
\rho %&= e^{-2\psi} \left(\frac{(n-1)(n-2)}{r^2} - 2(n-2)\Delta_r \psi -(n-3)(n-2)|d\psi|^2 \right)\\
&= U^{-\frac{4}{n-2}}  \left(\frac{(n-1)(n-2)}{r^2} - \frac{4\Delta_r U}{U} + \frac{4|\nabla_r U|^2}{(n-2)U^2}\right),
\end{align*}
where $\Delta_r$ and $\nabla_r$ are the Laplacian and (tangential) gradient on $S_r$ with the Riemannian metric induced from $\delta$, and  $|\cdot|^2$ is taken with respect to $\delta$. Now, we address the Laplacian term. A well-known formula for smooth functions $f$ on $\R^n$ is 
$$\Delta f = \Delta_\Sigma f + \Hess(f)(\nu,\nu) + H \partial_\nu(f),$$
where $\Sigma$ is a smooth hypersurface with unit normal $\nu$, mean curvature $H$ in the direction of $\nu$, and induced Laplacian $\Delta_\Sigma$. Applying this to $f=U$ and $\Sigma = S_r$, we have
$$0 = \Delta_r U+\Hess(U)(\partial_r,\partial_r) + \frac{n-1}{r} \cdot \frac{\partial U}{\partial r}.$$
By explicit calculation, the leading (i.e., $O(r^{-n}))$ terms of $ \Hess(U)(\partial_r,\partial_r)$ and $\frac{n-1}{r} \cdot \frac{\partial U}{\partial r} $ cancel, implying that
$$\Delta_r U = O(r^{-n-1}).$$
Next, for the term $|\nabla_r U|$, since $1+\frac{a}{r^{n-2}}$ is constant on $S_r$, we see from the expansion of $U$ that
$$|\nabla_r U|^2 = O(r^{-2n}).$$
Using these expansions, along with the expansion for $U$, we arrive at
\begin{align*}
\rho &= \left(1+ \frac{a}{r^{n-2}} + O(r^{1-n})\right)^{-\frac{4}{n-2}} \left(\frac{(n-1)(n-2)}{r^2}  +O(r^{-n-1})\right)\\
&= \frac{(n-1)(n-2)}{r^2} - \frac{4a(n-1)}{r^n} + O(r^{-n-1}).
\end{align*}

Putting it all together, we have
\begin{align}
F_g(S_r) &= \left(\frac{1}{2} r^{n-2}\left(1+ \frac{2a}{r^{n-2}} \right) + O(r^{-1})\right) \left(1-\frac{n-2}{n-1}\cdot\frac{\frac{(n-1)^2}{r^2} - \frac{4a(n-1)^3}{(n-2)r^n} + O(r^{-n-1})}{\frac{(n-1)(n-2)}{r^2} - \frac{4a(n-1)}{r^n} + O(r^{-n-1})}\right) \nonumber\\
&= \left(\frac{1}{2} r^{n-2}+  a + O(r^{-1})\right) \left(1-\frac{1 - \frac{4a(n-1)}{(n-2)r^{n-2}} + O(r^{-n+1})}{1 - \frac{4a}{(n-2)r^{n-2}} + O(r^{-n+1})}\right)\nonumber\\
&=\left(\frac{1}{2} r^{n-2}+  a + O(r^{-1})\right) \left(\frac{4a}{r^{n-2}} + O(r^{-n+1})\right)\nonumber\\
&= 2a + O(r^{-1}). \label{F_g}
\end{align}
Since the ADM mass of $g$ equals $2a$, the proof is complete.
\end{proof}

The next lemma is a generalization of the previous:

\begin{lemma}
\label{lemma_ADM_AS}
If $(M,g)$ is an asymptotically Schwarzschild manifold, then
\begin{equation*}
m_{ADM}(M,g) = \lim_{r \to \infty} F_g(S_r),
\end{equation*}
where $S_r$ is the coordinate sphere $\{|x|=r\}$ in an asymptotically Schwarzschild coordinate system.
\end{lemma}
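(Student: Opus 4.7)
The plan is to reduce to the HF case (Lemma \ref{lemma_ADM_HF}) by decomposing $g_{ij} = g^S_{ij} + h_{ij}$, where $g^S$ is the pure Schwarzschild metric of mass parameter $m$ (itself HF with $U = 1 + m/(2|x|^{n-2})$, so $a = m/2$ in the notation of Lemma \ref{lemma_ADM_HF}), and $h$ obeys \eqref{decay2}. Lemma \ref{lemma_ADM_HF} combined with the detailed expansion \eqref{F_g} already yields $F_{g^S}(S_r) = m + O(r^{-1})$, and the ADM mass of an asymptotically Schwarzschild metric equals $m$ by the remarks following \eqref{eqn_U}. Hence it suffices to prove that $F_g(S_r) - F_{g^S}(S_r) = O(r^{-1})$, which will follow once I control how the perturbation $h$ shifts each of $|S_r|_g$, $\max_{S_r}H_g^2$, and $\min_{S_r}\rho_g$ relative to their $g^S$ values.

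The three estimates I would establish are as follows. From $h_{ij} = O(r^{1-n})$ the induced area form on $S_r$ satisfies $dA_g = (1 + O(r^{1-n}))dA_{g^S}$, so $(|S_r|_g/\omega_{n-1})^{(n-2)/(n-1)}$ equals its $g^S$ counterpart to within $O(r^{-1})$. From $\partial h = O(r^{-n})$ the $g$- and $g^S$-unit normals to $S_r$ differ by $O(r^{1-n})$ and their second fundamental forms differ by $O(r^{-n})$ pointwise; combined with $H_{g^S} = O(r^{-1})$ this gives $|H_g^2 - H_{g^S}^2| = O(r^{-n-1})$, and since $H_{g^S}^2$ is constant on $S_r$ by rotational symmetry of $g^S$, $\max_{S_r}H_g^2 = H_{g^S}^2 + O(r^{-n-1})$. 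For the intrinsic scalar curvature, I would conformally rescale, setting $\tilde g = r^{-2} g|_{TS_r}$ and $\tilde g^S = r^{-2} g^S|_{TS_r}$, so that $\tilde g^S$ is a perturbation of the round unit sphere metric. Converting $h$ to spherical-angular components picks up an $O(r^2)$ factor from the coordinate Jacobian, which is precisely what is needed to make $\tilde g - \tilde g^S$ together with all of its first and second tangential derivatives of size $O(r^{1-n})$. Since the scalar curvature depends algebraically on the metric, its inverse, and derivatives of order at most two, $\rho_{\tilde g} - \rho_{\tilde g^S} = O(r^{1-n})$, and rescaling back, $\rho_g - \rho_{g^S} = r^{-2}(\rho_{\tilde g} - \rho_{\tilde g^S}) = O(r^{-n-1})$ pointwise, whence $\min_{S_r}\rho_g = \rho_{g^S} + O(r^{-n-1})$.

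Assembling these inside Definition \ref{def_F_g}: since $H_{g^S}^2$ and $\rho_{g^S}$ are both of order $r^{-2}$, the ratio $\max H_g^2/\min\rho_g$ equals $H_{g^S}^2/\rho_{g^S}$ up to $O(r^{1-n})$; multiplying by the area factor $\tfrac{1}{2}r^{n-2} + O(1)$ introduces at worst an $O(r^{-1})$ error, so $F_g(S_r) = F_{g^S}(S_r) + O(r^{-1}) = m + O(r^{-1})$, and taking $r \to \infty$ completes the proof.

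The main technical obstacle is the scalar-curvature estimate: one must show $\rho_g - \rho_{g^S} = O(r^{-n-1})$, and the raw Cartesian bound $\partial\partial h = O(r^{-n-1})$ from \eqref{decay2} does not immediately give a bound on the second \emph{tangential} derivatives of the induced metric. The rescaling $\tilde g = r^{-2} g|_{TS_r}$ is what converts this into a uniform perturbation problem on the unit sphere, and one has to check carefully that the $O(r)$ Jacobian factors from Cartesian-to-angular conversion precisely cancel the overall $r^{-2}$ rescaling so that the perturbation really is $O(r^{1-n})$ in all relevant jets. The remaining perturbation estimates for area and mean curvature are straightforward once this point has been handled.
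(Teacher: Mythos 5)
Your proposal is correct and follows essentially the same route as the paper: the paper's Appendix (Lemma \ref{lemma_asymp}) likewise decomposes $g$ into the Schwarzschild metric plus the perturbation $h$ and establishes exactly the estimates you outline ($\rho$ shifted by $O(r^{-n-1})$, $H$ by $O(r^{-n})$, the area form by a relative $O(r^{1-n})$), before feeding them into the calculation \eqref{F_g}. The only differences are in execution --- the paper handles the mean curvature via first variation of area and the scalar curvature via Christoffel-symbol bookkeeping in spherical coordinates rather than your rescaling to the unit sphere --- but these are equivalent ways of tracking the same orders.
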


\begin{proof}
This follows from Lemma \ref{lemma_asymp} in the Appendix and the calculations \eqref{F_g} at the end of the proof of the previous lemma.
\end{proof}

\section{Proof of Theorem \ref{thm2}}
\label{sec_proof}

The method of proof of Theorem \ref{thm2} is similar to the proof of Theorem \ref{thm1} in \cite{J}. 

Let $m_i = m_{ADM}(M_i,g_i)$, and note that $m_i \geq 0$ by the positive mass theorem in dimension $3 \leq n \leq 7$ (\cites{SY,SY2}, cf. Section 4 of \cite{Sch}). If $m_{ADM}(N,h)=0$, the claim \eqref{eqn_LSC} follows trivially, so we may assume it is strictly positive. 

Let $\epsilon > 0$. Fix an asymptotically Schwarzschild coordinate  system $(x^1, \ldots, x^n)$ on $(N,h)$, and let $S_r$ denote the coordinate sphere $\{|x|=r\}$, a smooth, compact hypersurface in $N$ for $r$ sufficiently large. Let $B_r$ denote the bounded open region in $N$ that $S_r$ encloses.

By Lemma \ref{lemma_ADM_AS} and the hypothesis that $(N,h)$ is asymptotically Schwarzschild of positive ADM mass, we may choose  a number $r_1>0$ sufficiently large so that
\begin{align}
m_{ADM}(N,h)&<F_h(S_{r_1}) + \frac{\epsilon}{2}\label{eqn_F_h} \qquad\text{ and }\\
F_h(S_{r_1})&>0.\label{eqn_F_h2}
\end{align}
By asymptotic flatness of $h$, we may increase $r_1$ if necessary, preserving \eqref{eqn_F_h} and \eqref{eqn_F_h2}, to arrange that the mean curvature of $S_r$ with respect to $h$ is strictly positive for all $r \geq r_1$, and that hypersurface areas measured with respect to $h$ and the Euclidean metric $\delta$ differ by at most a factor of 2 on $N \setminus B_{r_1}$ (i.e., the respective Hausdorff $(n-1)$-measures are uniformly equivalent by factors of 2).

We apply the definition of pointed $C^2$ Cheeger--Gromov convergence. First, take a number $r_2>0$ so that the metric ball $B_h(q,r_2)$ contains $B_{33r_1}$. (The value $33r_1$ is chosen because later we will need a point in $B_{33r_1} \setminus B_{r_1}$ that is distance $16r_1$ from both the inner and outer boundary.) Then there exists a domain $U \subset N$, with $U \supset B_h(q,r_2) \supset B_{33r_1}$, and smooth embeddings $\Phi_i: U \to M_i$, for $i \geq$ some $i_0$, with $\Phi_i(U) \supset B_{g_i}(p_i,r_2)$, such that
\begin{equation}
\label{eqn_h_i}
h_i:=\Phi_i^* g_i \to h \text{ in } C^2 \text{ on } U.
\end{equation}
(Below, we will repeatedly 
use the fact that $\Phi_i : (U,h_i) \to (\Phi_i(U), g_i)$ is trivially an isometry.)
Taking $i$ to be at least some $i_1 \geq i_0$, we can be sure that  hypersurface areas measured with respect to $h_i$ and $h$ differ by at most a factor of 2 on $U$, by $C^0$ convergence. Taking $i$ to be at least some $i_2 \geq i_1$, we can arrange that the mean curvatures of $S_r$ with respect to $h_i$ are strictly positive for all $r \in [r_1, 33r_1]$, using $C^1$ convergence of $h_i$ to $h$ on $U$. 

Next, let $S_i = \Phi_i(S_{r_1})$, a smooth compact hypersurface in $M_i$. We want to apply Theorem \ref{thm_MM} to the AF manifold-with-boundary obtained by removing $\Phi_i(B_{r_1})$ from $M_i$ (whose boundary is $S_i$). To do so, we must verify that $S_i$ is outward-minimizing in $(M_i, g_i)$. (This is not at all obvious, since $S_i$ need not even lie in the asymptotically flat end of $(M_i,g_i)$.) This issue was handled in \cite{J} via a monotonicity formula for minimal surfaces in a Riemannian manifold. However, we will instead use the more robust argument in \cite{JL}, using the notion of almost-minimizing currents.

\begin{lemma}
\label{lemma_tentacle}
For $i \geq i_2$, $S_i$ is (strictly) outward-minimizing in $(M_i,g_i)$.
\end{lemma}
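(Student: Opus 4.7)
The plan is to adapt the almost-minimizing current approach of \cite{JL} to higher dimensions and argue by contradiction. Suppose $S_i$ is not strictly outward-minimizing in $(M_i,g_i)$ for some $i\geq i_2$, so that there exists a Caccioppoli set $E\supset \Phi_i(\overline{B_{r_1}})$ with $|\partial^* E|_{g_i}\leq|S_i|_{g_i}$ and $\partial^* E\neq S_i$.

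First I would produce a genuine minimizer. Since $(M_i,g_i)$ is AF, large coordinate spheres in its AF end are strictly mean-convex and trap any minimizing sequence inside a compact region; BV compactness together with lower semicontinuity of perimeter then yields a minimizer $E_i^\star$. By interior regularity of area minimizers in ambient dimension at most seven, $\Sigma_i:=\partial^* E_i^\star$ is a smooth embedded hypersurface with $|\Sigma_i|_{g_i}\leq|S_i|_{g_i}$, $\Sigma_i\neq S_i$, and vanishing mean curvature on $\Sigma_i\setminus S_i$. The hypothesis that $(M_i,g_i)$ has no compact minimal hypersurfaces then rules out any component of $\Sigma_i$ disjoint from $S_i$, so every component of $\Sigma_i$ meets $S_i$.

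The main obstacle will be confining $\Sigma_i$ to $\Phi_i(\overline{B_{33r_1}})$. Suppose not. By path-connectedness of components and the fact that each meets $S_i$, a path in $\Sigma_i$ from a point of $S_i=\Phi_i(S_{r_1})$ to a point outside $\Phi_i(\overline{B_{33r_1}})$ must cross every coordinate sphere $\Phi_i(S_r)$ for $r\in[r_1,33r_1]$; in particular $\Sigma_i$ contains a point $p$ with $\Phi_i^{-1}(p)\in S_{17r_1}$. By $C^2$ convergence $h_i\to h$ on $U$ and the asymptotically Schwarzschild structure of $h$, the $g_i$-distances from $p$ to $S_i$ and to $\Phi_i(S_{33r_1})$ are each at least $cr_1$ for an absolute constant $c>0$, so that $B_{g_i}(p,cr_1)\subset\Phi_i(U)$ is disjoint from $S_i$ and carries a metric uniformly close to Euclidean. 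Applying the monotonicity formula for the minimal hypersurface $\Sigma_i\setminus S_i$ on this ball gives
\[
|\Sigma_i|_{g_i}\;\geq\;|\Sigma_i\cap B_{g_i}(p,cr_1)|_{g_i}\;\geq\;\tfrac{\omega_{n-1}}{2}(cr_1)^{n-1}.
\]
Since $|S_i|_{g_i}$ is comparable to $\omega_{n-1}r_1^{n-1}$, taking $r_1$ large enough (compatibly with \eqref{eqn_F_h}--\eqref{eqn_F_h2}) makes the right side strictly exceed $|S_i|_{g_i}$, contradicting $|\Sigma_i|_{g_i}\leq|S_i|_{g_i}$. The factor $33$ is chosen precisely to give $p$ some $16r_1$ of Euclidean room on each side, keeping the monotonicity ball inside the well-controlled region $\Phi_i(U)$.

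With confinement in hand, I would conclude using the mean-convex foliation of $\Phi_i(\overline{B_{33r_1}}\setminus B_{r_1})$ by the spheres $\{\Phi_i(S_r)\}_{r\in[r_1,33r_1]}$, whose mean curvatures (in the direction pointing toward $S_i$) are uniformly positive for $i\geq i_2$. Let $q\in\Sigma_i$ maximize $|\Phi_i^{-1}(\cdot)|$ over $\Sigma_i$, and set $r^\star:=|\Phi_i^{-1}(q)|\in[r_1,33r_1]$. If $r^\star>r_1$, then $q\notin S_i$ and $\Sigma_i$ lies locally on the $S_i$-side of $\Phi_i(S_{r^\star})$, tangent at $q$; the strong maximum principle, comparing the zero mean curvature of $\Sigma_i\setminus S_i$ at $q$ with the strictly positive mean curvature of $\Phi_i(S_{r^\star})$ toward $S_i$, yields a contradiction. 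Hence $r^\star=r_1$, so $\Sigma_i\subset\Phi_i(\overline{B_{r_1}})$. Combined with the fact that $\Sigma_i$ encloses $\Phi_i(B_{r_1})$, this forces $\Sigma_i=S_i$, contradicting $\Sigma_i\neq S_i$.
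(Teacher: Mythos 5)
Your overall architecture --- minimizing hull, exclusion of components disjoint from $S_i$ via the no-compact-minimal-hypersurface hypothesis, a monotonicity/area argument to confine the minimizer to $\Phi_i(\overline{B_{33r_1}})$, and the mean-curvature comparison principle to force $\Sigma_i=S_i$ --- is the same as the paper's. The genuine gap is in the confinement step. The inequality you need,
\[
\tfrac{\omega_{n-1}}{2}(cr_1)^{n-1} \;>\; |S_i|_{g_i}\;\leq\; C\,\omega_{n-1}r_1^{n-1},
\]
is homogeneous of degree $n-1$ in $r_1$: both sides scale identically, so ``taking $r_1$ large enough'' accomplishes nothing once you have declared $c$ to be an absolute constant. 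What must actually be verified is the purely numerical inequality $c^{n-1}/2>C$, and if $c\leq 1$ (which ``absolute constant $c>0$'' permits) this fails outright. The constants $33$, $17$, $16$ in the paper exist precisely to close this arithmetic with no slack: a point on $S_{17r_1}$ has Euclidean distance $16r_1$ to $\partial T_i$, the monotonicity constant $\gamma^{2-n}$ for a $16$-almost-minimizing current turns $(16r_1)^{n-1}$ into exactly $16\,\omega_{n-1}r_1^{n-1}$, and the two factor-of-two Hausdorff-measure comparisons ($\delta\leftrightarrow h$ and $h\leftrightarrow h_i$) reduce this to $4\omega_{n-1}r_1^{n-1}$, which strictly exceeds... rather, exactly matches the upper bound $|S_i|_{g_i}\leq 4\omega_{n-1}r_1^{n-1}$, with strictness supplied by $\tilde S_i'$ leaving the annulus. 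Your version never quantifies $c$ and so never closes.

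A second, related problem: you invoke ``the monotonicity formula for the minimal hypersurface $\Sigma_i\setminus S_i$'' on a ball whose metric you only control up to a factor of two. The Riemannian monotonicity formula carries a constant that degrades with the ambient curvature bounds and metric distortion, and a factor-of-two $C^0$ comparison does not by itself yield the constant $\omega_{n-1}/2$ you assert. This is exactly why the paper abandons the Riemannian-monotonicity route of \cite{J} in favor of the more robust device of \cite{JL}: pull $\tilde S_i'\cap\Phi_i(B_{33r_1}\setminus \bar B_{r_1})$ back to $\R^n$, show the resulting current is $16$-almost-minimizing for the \emph{flat} metric (using the area comparisons to transfer the $h_i$-minimizing property), and apply the monotonicity formula for $\gamma$-almost-minimizing currents, whose constant is explicit. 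Your argument could be repaired either by following that route, or by first tightening all comparison constants to $1+\epsilon$ (using asymptotic flatness of $h$ for $r_1$ large and $C^2$ convergence for $i$ large) and then doing the arithmetic honestly; as written, the key quantitative step is not justified.
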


\begin{proof}[Proof of Lemma \ref{lemma_tentacle}]
It is well-known from standard results in geometric measure theory (see \cite{HI} for instance) that there exists a compact hypersurface $\tilde S_i$ enclosing $S_i$ that has the least hypersurface area (with respect to $g_i$) among all compact hypersurfaces in $M_i$ enclosing $S_i$.  Moreover, $\tilde S_i$ has at least $C^{1,1}$ regularity, and $\tilde S_i \setminus S_i$, if nonempty, is a smooth minimal hypersurface. (This uses $n \leq 7$.) We complete the proof of the lemma by arguing that $\tilde S_i = S_i$, assuming henceforth that $i \geq i_2$.

If $\tilde S_i$ were to possess a connected component disjoint from $S_i$, then that component would be a compact minimal hypersurface in $(M_i, g_i)$, contrary to the hypothesis of Theorem \ref{thm2}. Thus, every connected component of $\tilde S_i$ intersects $S_i$.

Next, if $\tilde S_i$ happens to be contained in the compact region $\Phi_i(\ol B_{33r_1})$ and hence in $\Phi_i(\ol B_{33r_1} \setminus B_{r_1})$, there exists some point $p \in \tilde 
S_i$ at which the function $r \circ \Phi^{-1}|_{\tilde S_i}$ achieves its maximum on $\tilde S_i$. Say this maximum value is $r^* \in 
[r_1, 33r_1]$. If $r^*> r_1$, then $\tilde S_i$ is smooth and minimal (with respect to $g_i$) near $p$ and is tangent to 
$\Phi_i(S_{r^*})$. However, this contradicts the standard comparison principle for mean curvature, 
as $\Phi_i(S_{r^*})$ has  strictly positive mean curvature with respect to $g_i$ (because $S_{r^*}$ has 
strictly positive mean curvature with respect to $h_i$). Thus, $r^*= r_1$, and so $\tilde S_i = S_i$, as claimed.

The only remaining case is that $\tilde S_i$ possesses a connected component, say $\tilde S_i'$, that is not contained in $\Phi_i(\ol B_{33r_1} \setminus B_{r_1})$, but that intersects $S_i = \Phi_i(S_{r_1})$.  Let
$$T_i= \Phi_i^{-1}(\tilde S_i' \cap \Phi_i( B_{33r_1} \setminus \bar B_{r_1})) \; \subset \; B_{33r_1} \setminus \bar B_{r_1} \;\subset\; N.$$ 
Note that $T_i$ is a smooth hypersurface in the AF end of $N$, so that we may regard $T_i \subset \R^n$ with $\partial T_i \subset S_{r_1} \cup S_{33r_1}$.
By the connectedness of $\tilde S_i'$ and the continuity of $r$, there exists some point $q_i \in T_i \cap S_{17r_1}$, and the Euclidean distance from $q_i$ to $\partial T_i$ is $16r_1$.
Viewing $T_i$ naturally as an $(n-1)$-dimensional integral current in $\R^n$, we claim that $T_i$ is $\gamma$-\emph{almost-minimizing} for $\gamma=16$ (and will verify this later). Recall this means that given any ball $B$ in $\R^n$ that does not intersect $\partial T_i$, and any integral current $T$ with the same boundary as the restriction $T_i \llcorner B$, we have
$$|T_i \llcorner B|_\delta \leq \gamma |T|_\delta$$
for some constant $\gamma \geq 1$. (Here we are using $|\cdot|_\delta$ to denote both the Euclidean hypersurface area and the more general current mass.)
The following fact is a natural generalization of the classical monotonicity formula for minimal surfaces to the class of $\gamma$-almost-minimizing currents (see \cite{BL} for instance):
for $0 \leq s < \dist(q_i, \partial T_i) = 16r_1$, 
$$|T_i \llcorner B(q_i,s)|_\delta \geq \gamma^{2-n} \omega_{n-1} s^{n-1}.$$
Taking the limit $s \nearrow 16r_1$, we have
$$|T_i \llcorner B(q_i,16r_1)|_\delta \geq \gamma^{2-n} \omega_{n-1} (16r_1)^{n-1}=16 \omega_{n-1} (r_1)^{n-1},$$
taking $\gamma=16$. Using the factor-of-two area comparisons between $\delta$ and $h$ and between $h$ and $h_i$ on $U \setminus B_{r_1}$ for $i\geq i_2$, we then have
$$|T_i \llcorner B(q_i,16r_1)|_{h_i} \geq \frac{1}{4}\cdot 16 \omega_{n-1} (r_1)^{n-1}.$$
Applying $\Phi_i$,
it follows that $|\tilde S_i' \cap \Phi_i(\ol B_{33r_1})|_{g_i} \geq 4 \omega_{n-1} (r_1)^{n-1}$.  Since $\tilde S_i$ leaves $\Phi_i(\ol B_{33r_1})$, we obtain a strict inequality below:
\begin{equation}
\label{eqn_area_estimate}
|\tilde S_i|_{g_i} \geq |\tilde S_i'|_{g_i} > 4\omega_{n-1} (r_1)^{n-1}.
\end{equation}

On the other hand, since $\tilde S_i$ by definition has at most as much $g_i$-area as $S_i$,
$$|\tilde S_i|_{g_i} \leq |S_i|_{g_i} = |S_{r_1}|_{h_i} \leq 4|S_{r_1}|_{\delta} = 4 \omega_{n-1} (r_1)^{n-1},$$
producing a contradiction with \eqref{eqn_area_estimate}.

We now prove that  $T_i$ is $\gamma$-almost-minimizing in $\R^n$ with $\gamma=16$, which will complete the proof of Lemma \ref{lemma_tentacle}. 
Since $T_i$ is area-minimizing with respect to $h_i$ in  $B_{33r_1} \setminus \ol B_{r_1}$,
we know that 
$$|T_i \llcorner B|_{h_i} \leq |T|_{h_i},$$ 
for any integral current $T$ supported in $B_{33r_1} \setminus \ol B_{r_1}$, with $\partial T = \partial (T_i \llcorner B)$, where $B$ is a Euclidean ball in $B_{33r_1} \setminus \ol B_{r_1}$. For $i \geq i_2$, since the Hausdorff $(n-1)$-measures of $h$ and $h_i$ are uniformly equivalent by factors of two on $U$, this implies
$$|T_i \llcorner B|_{h} \leq 4|T|_{h}.$$
for such $B$ and $T$. Since $T_i$ is contained outside $S_{r_1}$, we can use the comparison of areas between $\delta$ and $h$ to see that
\begin{equation*}
|T_i \llcorner B|_{\delta} \leq 16|T|_{\delta}.
\end{equation*}
for such $B$ and $T$. However, in the definition of $\gamma$-almost-minimizing, one may without loss of generality consider competitors $T$ supported in $\bar B$, since $\bar B$ is convex. It follows that $T_i$ is 16-almost-minimizing, and the proof of Lemma \ref{lemma_tentacle} is complete.
%\qedhere
\end{proof}

\medskip

We continue with the proof of Theorem \ref{thm2}. Observe that
$F_g(S)$ varies continuously with respect to $C^2$ perturbations of $g$ on any neighborhood of $S$, since the area, mean curvature, and scalar curvature depend continuously on $g$ and its first and second derivatives. Then by the $C^2$ convergence in (\ref{eqn_h_i}), we may restrict to $i$ at least as large as some $i_3\geq i_2$ so that
\begin{equation}
\label{eqn_C2_close}
F_{h}(S_{r_1}) \leq F_{h_i}(S_{r_1})+\frac{\epsilon}{2}
\end{equation}
and that 
\begin{equation}
\label{eqn_F_h_i_S_r}
F_{h_i}(S_{r_1})>0
\end{equation}
(since $F_h(S_{r_1})>0$ by \eqref{eqn_F_h2}). Lemma \ref{lemma_tentacle} and \eqref{eqn_F_h_i_S_r} show that  Theorem \ref{thm_MM} may be applied to $M_i$ minus the open region $\Phi_i(B_{r_1})$, which has (connected) boundary $S_i$. Thus:
%$$F_{g_i}(S_i) \leq m_i.$$
\begin{equation}
\label{eqn_F_h_i}
F_{h_i}(S_{r_1}) =F_{g_i}(S_i) \leq m_i.
\end{equation}

Then for all $i \geq i_3$, we may combine (\ref{eqn_F_h}), (\ref{eqn_C2_close}), and (\ref{eqn_F_h_i}) to arrive at
$$m_{ADM}(N,h) < m_i + \epsilon.$$
Now, taking $\liminf_{i \to \infty}$ proves Theorem  \ref{thm2}, since $\epsilon>0$ was arbitrary.

\medskip

\begin{remark}
\label{rmk_higher_dim_C0}
The above proof generalizes the $C^2$ lower semicontinuity result from $n=3$ in \cite{J} to $3 \leq n \leq 7$. By contrast, extending the $C^0$ lower semicontinuity result in \cite{JL} to higher dimensions would be much more difficult. In the $C^0$ case, the dimension three hypothesis is relied on to a greater extent. First, the Hawking mass estimate \eqref{eqn_HI} of Huisken--Ilmanen, valid only in dimension three, is used to ensure monotonicity under mean curvature flow of a certain quantity (whose details we omit here) defined by Huisken. The author is not aware of such a monotone quantity in higher dimensions. Second, in \cite{JL}, use is made of B. White's regularity theory for the weak (level set) version of mean curvature flow that is especially nice in ambient dimension three \cite{Wh}.
\end{remark}

\begin{remark}
\label{rmk_HF}
As mentioned in the introduction, we strongly conjecture that the hypothesis that the limit $(N,h)$ is asymptotically Schwarzschild in Theorem \ref{thm2} (as opposed to asymptotically flat) is unnecessary. We note this generalization would follow by establishing a density result of the following form: Given $\epsilon>0$ and a sequence $(M_i,g_i,p_i)$ of AF manifolds of nonnegative scalar curvature converging in the pointed $C^2$ Cheeger--Gromov sense to an AF manifold $(N,h,q)$, construct a HF perturbation $\bar h$ of $h$  (with $|m_{ADM}(N,\bar h) - m_{ADM}(N, h)|<\epsilon$) and AF metrics $\bar g_i$ on $M_i$ of nonnegative scalar curvature, with $|m_{ADM}(M_i,\bar g_i) - m_{ADM}(M_i, g_i)|<\epsilon$, such that $(M_i, \bar g_i,p_i) \to (N, \bar h,q)$ in the pointed $C^2$ Cheeger--Gromov sense. Such a result would immediately generalize Theorem \ref{thm2} to remove the restriction that $(N,h)$ is asymptotically Schwarzschild, since HF manifolds are such.
\end{remark}

\section{Lower semicontinuity for weighted $C^2$ convergence in all dimensions}
\label{sec_weighted}

In this section we study the behavior of the ADM mass under \emph{weighted} $C^2$ convergence. This corresponds to a finer topology than that of pointed $C^2$ Cheeger--Gromov convergence. In particular it is easier here to establish semicontinuity of the ADM mass and to obtain a stronger result: Theorem \ref{thm_weighted} from the introduction is valid in all dimensions $n \geq 3$, requires no hypothesis on minimal surfaces, and does not rely on (nor recover) the positive mass theorem.

To describe the setup, let $M$ be a smooth $n$-manifold that admits an AF metric. Fix a compact set $K \subset M$ and an AF coordinate system on $M \setminus K$ (for some AF metric).
For an integer $k \geq 0$ and a real number $\tau>0$, let $C^k_{-\tau}(M\setminus K)$ denote the class of $C^k$ functions $f:M\setminus K \to \R$ for which the quantity
$$\|f\|_{C^k_{-\tau}(M \setminus K)} = \sum_{0\leq |\gamma| \leq k} \sup_{x \in M\setminus K} |x|^{|\gamma|+\tau} |\partial^\gamma f(x)|$$
is finite, 
where the partial derivatives are taken with respect to the coordinate chart, and $\gamma$ represents multi-indices.
Thus,  functions in $C^k_{-\tau}(M\setminus K)$ decay as $O(r^{-\tau})$ or faster as $r \to \infty$, with successively faster decay up through $k$th-order derivatives. Define $C^k_{-\tau}(M)$ to be the set of $C^k$ functions $f:M \to \R$ with $f|_{M \setminus K} \in C^k(M \setminus K)$, equipped with the norm given as the sum of $\|f\|_{C^k_{-\tau}(M \setminus K)}$ and the $C^k$ norm of $f|_K$.

Note that if $g$ is an AF metric on $g$ of order $\tau$ obeying the decay conditions \eqref{decay} in the fixed coordinate chart, then 
\begin{equation}
\label{eqn_gij}
g_{ij} - \delta_{ij} \in C^2_{-\tau}(M\setminus K).
\end{equation}
For $k\geq 2$ and $\tau >0$, we let $\Met^k_{-\tau}(M)$ denote the set of $C^k$ Riemannian metrics $g$ on $M$ satisfying \eqref{eqn_gij} in the fixed coordinate chart. (The ADM mass of $g \in \Met^k_{-\tau}(M)$ is well-defined if $\tau > \frac{n-2}{2}$ and the scalar curvature of $g$ is integrable \cites{Ba0,Chr}.)
We say a sequence of Riemannian metrics $\{g^\ell\}_{\ell=1}^\infty$ in $\Met^k_{-\tau}(M)$ converges to $g \in \Met^k_{-\tau}(M)$ as $\ell \to \infty$ if $\|g_{ij}^\ell - g_{ij}\|_{C^k_{-\tau}(M \setminus K)} \to 0$ for all $i$ and $j$ and the tensors $g^\ell|_K$ converge in $C^k$ to $g|_K$ as $\ell \to \infty$. 

For the reader's convenience, we restate Theorem \ref{thm_weighted} from the introduction.
\begin{thm}
\label{thm_weighted_restate}
Suppose $\{g^\ell\}_{\ell=1}^\infty$ converges to $g$ as asymptotically flat Riemannian metrics in \emph{$\Met_{-\tau}^2(M)$}, where $\tau> \frac{n-2}{2}$. 
%Assume that the scalar curvatures $R(g^\ell)$ and $R(g)$ of $g^\ell$ and $g$ are integrable for all $\ell$.
Then
\begin{align}
\lim_{\ell \to \infty} &\left(m_{ADM}(M,g^\ell) -  \frac{1}{2(n-1) \omega_{n-1}} \int_M R(g^\ell) dV_{g^\ell}\right)\nonumber\\
&\qquad\qquad\qquad\qquad \qquad\qquad= m_{ADM}(M,g) - \frac{1}{2(n-1) \omega_{n-1}} \int_M R(g) dV_{g},\label{eqn_lim}
\end{align}
where $dV_{g^\ell}$ and $dV_g$ are the volume measures of $g^\ell$ and $g$. Moreover, if $R(g^\ell) \geq 0$ for all $\ell$, then
$$\liminf_{\ell \to \infty} m_{ADM}(M,g_\ell) \geq m_{ADM}(M,g).$$ 
\end{thm}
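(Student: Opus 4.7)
The plan is to rewrite
\[
E(g) := 2(n-1)\omega_{n-1}\,m_{ADM}(M,g) - \int_M R(g)\, dV_g
\]
as a finite sum of quantities that are manifestly continuous in the $\Met^2_{-\tau}$ topology, and then extract the lim inf bound from \eqref{eqn_lim} by combining the result with Fatou's lemma for nonnegative measures. The principal analytic tool is the classical pointwise identity valid in any AF coordinate chart: setting $\mathcal{B}_j := \sum_i(\partial_i g_{ij} - \partial_j g_{ii})$, a direct expansion of
\[
R(g) = g^{ij}\bigl(\partial_k \Gamma^k_{ij} - \partial_j \Gamma^k_{ik}\bigr) + g^{ij}\bigl(\Gamma^k_{ij}\Gamma^l_{kl} - \Gamma^k_{il}\Gamma^l_{jk}\bigr)
\]
about $g=\delta$ yields
\[
R(g) = \sum_j \partial_j \mathcal{B}_j + Q(g),
\]
where $Q(g)$ collects every contribution that is either quadratic in $\partial g$ or of the form $(g^{ij}-\delta^{ij})\cdot \partial^2 g$. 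Under the decay \eqref{decay}, each such contribution is pointwise $O(|x|^{-2\tau-2})$, hence lies in $L^1(M\setminus K)$ precisely because $\tau > \tfrac{n-2}{2}$.

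Next, I fix $R_0$ large enough that $K \subset \{|x| \leq R_0\}$ in the AF chart, and set $S_r = \{|x|=r\}$ with outward Euclidean unit normal $\nu = x/|x|$. Applying the Euclidean divergence theorem to $\mathcal{B}_j$ on $\{R_0 \leq |x| \leq r\}$ and sending $r \to \infty$, in conjunction with the definition of $m_{ADM}$, produces
\[
2(n-1)\omega_{n-1}\, m_{ADM}(M,g) = \int_{|x|\geq R_0}\sum_j \partial_j \mathcal{B}_j\, dx + \int_{S_{R_0}}\sum_j \mathcal{B}_j\, \nu^j\, dA_\delta.
\]
Substituting $\sum_j \partial_j \mathcal{B}_j = R(g) - Q(g)$ and converting $dx = (\det g)^{-1/2}\, dV_g$ on $M\setminus K$, one obtains after rearrangement
\begin{align*}
E(g) &= \int_{S_{R_0}}\sum_j \mathcal{B}_j\, \nu^j\, dA_\delta - \int_K R(g)\, dV_g \\
&\qquad - \int_{|x|\geq R_0}\sum_j \partial_j \mathcal{B}_j\cdot\bigl(\sqrt{\det g}-1\bigr) dx - \int_{|x|\geq R_0} Q(g)\sqrt{\det g}\, dx.
\end{align*}

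Each of the four terms is continuous in $\Met^2_{-\tau}$. The two compact pieces (over $K$ and $S_{R_0}$) are handled by the uniform $C^2$ convergence of $g^\ell$ to $g$ on compact subsets implicit in the weighted topology. For the two tail integrals, the uniform (in $\ell$) bounds $|\partial_j \mathcal{B}_j(g^\ell)| \leq C|x|^{-\tau-2}$, $|\sqrt{\det g^\ell}-1| \leq C|x|^{-\tau}$, and $|Q(g^\ell)| \leq C|x|^{-2\tau-2}$ jointly furnish a common $L^1$ majorant of the form $C|x|^{-2\tau-2}$, integrable on $\{|x|\geq R_0\}$ exactly because $\tau > \tfrac{n-2}{2}$; since $\Met^2_{-\tau}$ convergence delivers pointwise (indeed weighted) convergence of the integrands, dominated convergence lets both tails pass to the limit. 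This proves \eqref{eqn_lim}.

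For the final assertion, I decompose
\[
m_{ADM}(M,g^\ell) = \tfrac{1}{2(n-1)\omega_{n-1}}\,E(g^\ell) + \tfrac{1}{2(n-1)\omega_{n-1}}\!\int_M R(g^\ell)\, dV_{g^\ell}.
\]
The first summand converges to $m_{ADM}(M,g) - \tfrac{1}{2(n-1)\omega_{n-1}}\int_M R(g)\, dV_g$ by \eqref{eqn_lim}. For the second, $R(g^\ell)\to R(g) \geq 0$ pointwise, and on any compact $K'\subset M$ uniform $C^2$ convergence gives $\int_{K'}R(g^\ell)\, dV_{g^\ell}\to \int_{K'}R(g)\, dV_g$. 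Since $R(g^\ell)\geq 0$, the inequality $\int_M R(g^\ell)\, dV_{g^\ell}\geq \int_{K'}R(g^\ell)\, dV_{g^\ell}$ yields upon taking $\liminf_\ell$ that $\liminf_\ell \int_M R(g^\ell)\, dV_{g^\ell}\geq \int_{K'}R(g)\, dV_g$; exhausting $M$ by compacts and using monotone convergence with $R(g)\geq 0$ produces $\liminf_\ell \int_M R(g^\ell)\, dV_{g^\ell}\geq \int_M R(g)\, dV_g$. Combined with the convergence of the first summand, this gives $\liminf_\ell m_{ADM}(M,g^\ell) \geq m_{ADM}(M,g)$. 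The main technical obstacle is the borderline decay $|x|^{-2\tau-2}$ of the tail integrands, which is integrable precisely under the hypothesis $\tau > \tfrac{n-2}{2}$ that is already required to make the ADM mass well-defined.
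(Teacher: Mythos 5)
Your proposal is correct and follows essentially the same route as the paper: both express the ADM mass via the divergence theorem applied to $\mathcal{B}_j=\sum_i(\partial_i g_{ij}-\partial_j g_{ii})$, exploit that $R(g)-\sum_j\partial_j\mathcal{B}_j$ (plus the volume-form correction) is $O(|x|^{-2\tau-2})$ with $2\tau+2>n$ to apply dominated convergence, and then use a Fatou-type argument with $R(g^\ell)\geq 0$ for the lim inf statement. Your version is slightly more explicit about the boundary term on $S_{R_0}$ and the $\sqrt{\det g}-1$ correction, but the substance is identical.
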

Our proof below is a generalization of that of Y. Li \cite{Li}, who studied the behavior of the ADM mass and integral of scalar curvature in the case of a convergent Ricci flow.
\begin{proof}
Let $g_0$ be a background Riemannian metric on $M$ whose expression in $M \setminus K$ in the given AF coordinate chart is $\delta_{ij}$. Let $\Div_0$ be the divergence operator on tensors  and $\Delta_0$ the Laplacian on functions with respect to $g_0$. Define the continuous operator $\cD: \Met^2_{-\tau}(M) \to C^0_{-\tau-2}(M)$ by
$$\cD(g) = \Div_0(\Div_0 g) - \Delta_0 \left( \tr_{g_0}(g)\right).$$
The significance of $\cD$ is the formula for the ADM mass of $g \in \Met^2_{-\tau}(M)$ (provided $\tau > \frac{n-2}{2}$ and the scalar curvature of $g$ is integrable):
\begin{equation}
\label{eqn_int_R}
m_{ADM}(g) = \frac{1}{2(n-1)\omega_{n-1}} \int_{M} \cD(g) dV_0,
\end{equation}
which follows immediately  from the divergence theorem. Here, $dV_0$ is the volume measure of $g_0$.

By the $\Met^2_{-\tau}(M)$ convergence of $g^\ell$ to $g$, we have $\cD(g^\ell) \to \cD(g)$ in $C^0_{-\tau-2}(M)$. However, since $\tau+2$ is generally less than the $O(r^{-n})$ threshold for integrability, we cannot immediately apply the dominated convergence theorem. (And since we have no control on the sign of $\cD(g^\ell)$, we cannot apply Fatou's lemma.)

We proceed instead by considering the difference between $\cD(\cdot)$ and $R(\cdot)$ (a well-known trick), where $R: \Met^2_{-\tau}(M) \to C^0_{-\tau-2}(M)$ is the scalar curvature operator. Working in the fixed chart on $M \setminus K$, for any Riemannian metric $h \in \Met^2_{-\tau}(M)$ with Christoffel symbols $\Gamma_{ij}^k$, we have
\begin{align*}
\cD(h) &= \partial_i \partial_j h_{ij} - \partial_j \partial_j h_{ii}\\
\cR(h) &= h^{jk}\left(\partial_{i} \Gamma^i_{jk} - \partial_{k} \Gamma^i_{i j} + \Gamma^m_{jk}\Gamma^i_{im} - \Gamma^{m}_{i j} \Gamma^i_{km}\right).
\end{align*}
By direct computation, $\cD(g^\ell) - R(g^\ell)$ is 
$O(r^{-2-2\tau})$, where $O(r^{-2-2\tau})$ here is uniform in 
$\ell$ and moreover goes to zero in $C^0_{-2-2\tau}(M)$ as 
$\ell \to \infty$. Since $2+2\tau > n$, this $O(r^{-2-2\tau})$ error term is 
uniformly bounded by an integrable function on $M$. Then by the 
dominated convergence theorem and pointwise convergence of 
$\cD(g^\ell) - R(g^\ell)$ to $\cD(g) - R(g)$,
$$\lim_{\ell \to \infty} \int_M \left(\cD(g^\ell) - R(g^\ell)
\right) dV_{g^\ell} = \int _M\left(\cD(g) - R(g)\right) dV.$$
Together with \eqref{eqn_int_R}, this proves \eqref{eqn_lim}.

For the last claim assume $R(g^\ell)\geq 0$ for all $\ell$, and let $\mu = \liminf_{\ell \to \infty} m_{ADM}(M,g_\ell)$. If $\mu = +\infty$, the claim follows trivially. Suppose $\mu$ is finite. Pass to a subsequence $\{(M, g^{\ell(k)})\}_{k}$ for which
$$\lim_{k \to \infty}  m_{ADM}(M, g^{\ell(k)}) = \mu.$$
By the first part of the theorem, the sequence $ \int_M R(g^{\ell(k)}) dV_{g^{\ell(k)}}$ then converges, and moreover
\begin{equation}
\label{eqn_mu}
\mu = m_{ADM}(M,g) +  \frac{1}{2(n-1) \omega_{n-1}} \left(\lim_{k \to \infty} \int_M R(g^{\ell(k)}) dV_{g^{\ell(k)}} -\int_M R(g) dV_{g}\right).
\end{equation}
By the (weighted) $C^2$ convergence of $g^{\ell(k)}$ to $g$ as $k \to \infty$,  we have pointwise convergence of the scalar curvatures and volume forms. Thus, by Fatou's lemma and the hypothesis $R(g^{\ell(k)})\geq 0$, the expression in parentheses in \eqref{eqn_mu} is nonnegative. This completes the proof if $\mu$ is finite. 

Finally, suppose $\mu = -\infty$; this would be precluded by the positive mass theorem, but we want the proof independent of the PMT. Then by \eqref{eqn_lim}, a subsequence $\{(M, g^{\ell(k)})\}_{k}$ has its integral of scalar curvature converging to $-\infty$. This violates the hypothesis $R(g^{\ell(k)}) \geq 0$.
\end{proof}

\begin{remark}
Interestingly, Theorem \ref{thm_weighted_restate} implies that for the case of weighted $C^2$ convergence, the mass drop is accounted for completely by the total matter (i.e., the integral of scalar curvature) escaping off to infinity, much like in the example in section \ref{sec_NG} from Newtonian gravity. This contrasts with the case of pointed $C^2$ Cheeger--Gromov convergence, in which the ADM mass can drop within the class of scalar-flat metrics (e.g., the examples in sections \ref{sec_blowup} or \ref{sec_escape}, choosing $(M,g)$  to be scalar-flat with positive ADM mass).
\end{remark}

\begin{remark}
Note that the lower semicontinuity of the ADM mass with respect to weighted $C^2$ convergence does not imply the positive mass theorem as in the blow-up example or escaping point example with pointed convergence in Section \ref{sec_examples}. In those cases, the metrics do not converge to Euclidean space in a weighted sense.
\end{remark}

\subsection{Example: mass drop with weighted convergence}
We conclude this section by describing an example of AF metrics $g_i$, with nonnegative scalar curvature, converging in $\Met^2_{-\tau}(M)$ with $\tau > \frac{n-2}{2}$ for which the ADM mass drops. Physically, the construction involves considering a sequence of shells of matter, of fixed total mass, at progressively larger radii. For $n \geq 3$, let $\rho: \R^n \to \R$ be a smooth, radially symmetric, nonnegative function supported in the annulus between radii $\frac{1}{2}$ and $1$, with $\int_{\R^n} \rho = 1$. For $i=1,2,\ldots$, define a sequence of smooth functions
$$\rho_i(x) = i^{-n}\rho(x/i),$$
which also satisfy $\int_{\R^n} \rho_i = 1$ and are supported in the annulus between radii $\frac{i}{2}$ and $i$.

By elliptic PDE theory (or ODE theory), there exists a unique smooth solution (for each $i$) to the linear elliptic problem:
$$\begin{cases}
-\Delta v_i = \rho_i & \text{ on } \R^n\\
v_i \to 0 & \text{ at infinity.}
\end{cases}$$
Recognizing $v_i(x) = i^{2-n} v_1(x/i)$, it is easy to see that $v_i \to 0$ in $C^2_{-\tau}(M)$ for any $\tau < n-2$ as $i \to \infty$. Fix $\tau \in \left( \frac{n-2}{2}, n-2\right)$.

For $i$ sufficiently large, $u_i:=1+v_i$ is positive, and the Riemannian metric $g_i:=u_i^{\frac{4}{n-2}} \delta$ is asymptotically flat. Note that the scalar curvature of $g_i$
$$R_i = -\frac{4(n-1)}{n-2} u_i^{-\frac{n+2}{n-2}} \Delta u_i =  \frac{4(n-1)}{n-2} u_i^{-\frac{n+2}{n-2}} \rho_i,$$
is integrable because it has compact support.

Now, $g_i$ converges to the Euclidean metric in $\Met^2_{-\tau}(M)$ as $i \to \infty$, and each $g_i$ has nonnegative scalar curvature. 
We show now (using the divergence theorem) that the ADM mass of $g_i$ is a positive constant, independent of $i$:
\begin{align*}
m_{ADM}(g_i) &= -\frac{2}{(n-2)\omega_{n-1}} \lim_{r \to \infty} \int_{S_r} \nu(u_i) dA\\
&= -\frac{2}{(n-2)\omega_{n-1}} \int_{\R^n} \Delta u_i dV\\
&= \frac{2}{(n-2)\omega_{n-1}} \int_{\R^n} \rho_i dV\\
&= \frac{2}{(n-2)\omega_{n-1}},
\end{align*}
where $dA$ and $dV$ are the hypersurface area and the volume forms with respect to the Euclidean metric.
However, the ADM mass of the limit, Euclidean $\R^n$, vanishes.

\section{Two-dimensional case of semicontinuity of mass}
\label{sec_2D}
In two dimensions, a natural replacement for asymptotically flat manifolds is the class of asymptotically conical surfaces, with the asymptotic cone angle playing the role of mass. The author thanks E. Woolgar for his suggestion to investigate the semicontinuity of mass in this setting.

Following \cite{IMS}, for $\alpha > 0$, let
$$g_\alpha = dr^2 + \alpha^2 r^2 d\theta^2,$$
a smooth Riemannian metric on $\R^2 \setminus \{0\}$ describing a cone. Note that $g_\alpha$ has vanishing Gauss curvature. Define a connected two-dimensional Riemannian manifold $(M,g)$ to  be \emph{asymptotically conical  with cone angle $2\pi\alpha >0$} if there exists a compact set $C \subset M$ such that $M \setminus C$ is diffeomorphic to the complement of a closed ball in $\R^2$, on which $g-g_\alpha = O_2(r^{-\tau})$ for some constant $\tau > 0$. In particular, the Gauss curvature of $g$ is $O(r^{-2-\tau})$ and hence integrable.

We recall here that the integral of the Gauss curvature captures the cone angle. To see this, let $B_r$ be the compact region bounded by the coordinate circle $\Gamma_r$ in $(M,g)$ for $r$ large. By the Gauss--Bonnet formula,
\begin{equation}
\label{GB}
\int_{B_r} K dA = 2\pi \chi(B_r) - \int_{\Gamma_r} \kappa_g ds,
\end{equation}
where $\kappa_g$ is the geodesic curvature of $\Gamma_r$ with respect to $g$. By the $O_2(r^{-\tau})$ decay of $g$ to $g_\alpha$, we have
$$\lim_{r \to \infty} \int_{\Gamma_r} \kappa_g ds = \lim_{r \to \infty} \int_{\Gamma_r} \kappa_{g_\alpha} ds = 2\pi\alpha,$$
the latter equality given by direct calculation, where $\kappa_{g_\alpha}$ is the geodesic curvature of $\Gamma_r$ with respect to $g_\alpha$.
Taking the limit $r \to \infty$ in \eqref{GB} (and noting that $\chi(B_r)$ is eventually a constant, $\chi(M)$), we have
\begin{equation}
\label{eqn_KdA0}
\int_M K dA = 2\pi(\chi(M) -1) + 2\pi (1-\alpha).
\end{equation}
Note that if $K \geq 0$, it follows that $\chi(M) > 0$, and using the fact that $M$ is topologically the connect sum of $\R^2$ and a compact, connected surface, it follows that $\chi(M)=1$ and that $M$ itself is topologically $\R^2$. 

We define the mass of an asymptotically conical surface to be:
$$m_{cone}(M,g) = 1-\alpha,$$
which we note is a dimensionless quantity. The positive mass theorem is then immediate: $K \geq 0$ implies $m_{cone} \geq 0$, and equality holds if and only if $K \equiv 0$ and $M$ is homeomorphic to $\R^2$, which holds if and only if $(M,g)$ is isometric to the Euclidean plane.

Below is the statement of $C^2$ pointed lower semicontinuity of the mass in two dimensions (i.e., upper semicontinuity of the cone angle). Note that no hypothesis on closed geodesics (the analogs of compact minimal hypersurfaces) is necessary.

\begin{thm}
\label{thm_cone}
Suppose $(M_i, g_i, p_i)$ converges in the pointed $C^2$ Cheeger--Gromov sense to $(N,h,q)$ as pointed asymptotically conical Riemannian 2-manifolds. Suppose each $(M_i, g_i)$ has nonnegative Gauss curvature. Then
\begin{equation}
\label{eqn_LSC_2D}
m_{cone}(N,h) \leq \liminf_{i \to \infty} m_{cone}(M_i,g_i).
\end{equation}
\end{thm}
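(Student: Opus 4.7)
The plan is to exploit the Gauss--Bonnet identity \eqref{eqn_KdA0} to reduce the semicontinuity statement to a Fatou-type inequality for the integral of the (nonnegative) Gauss curvature, which is essentially immediate from pointed $C^2$ convergence.

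First I would verify that for an asymptotically conical surface $(M,g)$ with $K \geq 0$, \eqref{eqn_KdA0} together with the topological observation already made in the excerpt (namely $\chi(M) = 1$) gives the clean identity
\begin{equation*}
m_{cone}(M,g) = \frac{1}{2\pi} \int_M K \, dA.
\end{equation*}
This applies to each $(M_i, g_i)$ by hypothesis. To apply it to the limit $(N,h)$, I need $K_h \geq 0$, which I would obtain as follows: on any precompact open set $\Omega \subset N$, the pullbacks $h_i := \Phi_i^* g_i$ converge to $h$ in $C^2$, so the Gauss curvatures $K_{h_i}$ converge uniformly to $K_h$ on $\Omega$; since $\Phi_i$ is a local isometry from $(\Omega, h_i)$ into $(M_i, g_i)$, we have $K_{h_i} \geq 0$ pointwise on $\Omega$, and passing to the limit yields $K_h \geq 0$ on all of $N$. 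Combined with asymptotic conicality, this forces $\chi(N) = 1$ and the integral formula above holds for $(N,h)$ as well.

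Next, for any compact $\Omega \subset N$, the $C^2$ convergence $h_i \to h$ implies uniform convergence of both $K_{h_i}$ and the area density, so
\begin{equation*}
\int_\Omega K_h \, dA_h = \lim_{i \to \infty} \int_\Omega K_{h_i} \, dA_{h_i} = \lim_{i \to \infty} \int_{\Phi_i(\Omega)} K_{g_i} \, dA_{g_i} \leq \liminf_{i \to \infty} \int_{M_i} K_{g_i} \, dA_{g_i},
\end{equation*}
where the final inequality uses $K_{g_i} \geq 0$. Taking the supremum over an exhaustion of $N$ by compact sets (monotone convergence), the left-hand side tends to $\int_N K_h \, dA_h = 2\pi \, m_{cone}(N,h)$, while the right-hand side equals $2\pi \liminf_i m_{cone}(M_i, g_i)$. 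Dividing by $2\pi$ yields \eqref{eqn_LSC_2D}.

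There is no real obstacle here; the two-dimensional case is easy precisely because Gauss--Bonnet exhibits the mass as the integral of a pointwise nonnegative curvature quantity (contrast this with the higher-dimensional situation, where the ADM mass is not known to admit such a representation, forcing the more delicate arguments of Sections \ref{sec_HF}--\ref{sec_proof}). The only mild care needed is to confirm that the topological hypothesis $\chi(N) = 1$ is automatic in the limit, which is what the pointwise inheritance of $K \geq 0$ accomplishes; and to justify interchanging the limit with the integral, which reduces to uniform convergence on compacts together with the monotonicity provided by the sign of $K_{g_i}$.
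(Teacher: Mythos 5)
Your proposal is correct and takes essentially the same approach as the paper's proof: both use Gauss--Bonnet to identify $m_{cone}$ with $\frac{1}{2\pi}\int K\,dA$ (after noting that $C^2$ convergence forces $K_h \geq 0$ and hence $\chi(N)=1$), and then combine convergence of the curvature integrals on compact sets with the nonnegativity of $K_{g_i}$ in a Fatou-type estimate. The only difference is presentational: you take a supremum over an exhaustion by compact sets, whereas the paper runs an explicit $\epsilon$ argument with a single large coordinate ball.
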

An example for which strict inequality holds in \eqref{eqn_LSC_2D} can be found using the blow-up or escaping point examples in sections \ref{sec_blowup} and \ref{sec_escape}, beginning with an asymptotically conical surface with nonnegative Gauss curvature and $\alpha < 1$.

\begin{proof}
Let $\epsilon>0$. By the $C^2$ convergence, $h$ itself has nonnegative Gauss curvature $K_h$, so in particular $\chi(N)=1$. Then by \eqref{eqn_KdA0},
$$m_{cone}(N,h) = \frac{1}{2\pi} \int_N K_h dA_h.$$
Since $K_h$ is integrable, we may choose $r>0$ sufficiently large so that the coordinate ball $B_r \subset N$ satisfies
$$m_{cone}(N,h) <  \frac{1}{2\pi} \int_{B_r} K_h dA_h + \frac{\epsilon}{2}.$$
Choosing $U \supset B_r$ and obtaining appropriate embeddings $\Phi_i: U \to M_i$ such that $h_i:=\Phi_i^* g_i$ converges in $C^2$ to $h$, we may take $i$ sufficiently large so that
\begin{equation}
\label{eqn_KdA}
\frac{1}{2\pi}\int_{B_r} K_h dA_h -  \frac{\epsilon}{2} < \frac{1}{2\pi}\int_{B_r} K_{h_i} dA_{h_i} =  \frac{1}{2\pi}\int_{\Phi_i(B_r)} K_{g_i} dA_{g_i}.
\end{equation}
Since $(M_i,g_i)$ has nonnegative Gauss curvature, the right-hand side in \eqref{eqn_KdA} is an underestimate for $m_{cone}(M_i,g_i)$. Thus,
$$m_{cone}(N,h) < m_{cone}(M_i,g_i) + \epsilon$$
for $i$ sufficiently large. From this, the result follows.
\end{proof}

We leave it as an open problem to study the behavior of the cone angle under weaker forms of convergence, such as pointed $C^0$ Cheeger--Gromov, pointed Gromov--Hausdorff, or pointed Sormani--Wenger intrinsic flat convergence \cite{SW}.

\section*{Appendix: geometry of asymptotically Schwarzschild metrics}
The purpose of this appendix is to prove the following asymptotic estimates  for large coordinate spheres in an asymptotically Schwarzschild manifold. These were used in the proof of Lemma \ref{lemma_ADM_AS}.

\begin{lemma}
\label{lemma_asymp}
Let $(M, \tilde g)$ be an asymptotically Schwarzschild manifold of dimension $n \geq 3$ and ADM mass $m$. 
Let $S_r$ be the coordinate sphere of large radius $r$ in $M$. 
Let  $\tilde \rho$ be the scalar curvature of $S_r$ with respect to the metric induced from $\tilde g$, and let $\tilde H$ be the mean curvature of $S_r$ with respect to $\tilde g$. Then:
\begin{align}
\tilde \rho &= \frac{(n-1)(n-2)}{r^2} -\frac{2(n-1)m}{r^n} + O(r^{-n-1}) \label{eqn_tilde_rho}\\
\tilde H  &=\frac{n-1}{r}  -\frac{(n-1)^2m}{(n-2)r^{n-1}} +O(r^{-n}).  \label{eqn_tilde_H}
\end{align}

\end{lemma}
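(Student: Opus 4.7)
The plan is to decompose $\tilde g = g_S + h$, where $g_S := U^{4/(n-2)}\delta$ is the pure Schwarzschild metric with $U = 1 + \frac{m}{2|x|^{n-2}}$, and reduce to the harmonically flat computation already performed in Lemma \ref{lemma_ADM_HF}. Since $g_S$ is HF with expansion constant $a = m/2$ in \eqref{eqn_U}, specializing the intermediate formulas from the proof of Lemma \ref{lemma_ADM_HF} to $a = m/2$ gives directly
\begin{align*}
H_{g_S} &= \frac{n-1}{r} - \frac{(n-1)^2 m}{(n-2)r^{n-1}} + O(r^{-n}),\\
\rho_{g_S} &= \frac{(n-1)(n-2)}{r^2} - \frac{2(n-1)m}{r^n} + O(r^{-n-1}),
\end{align*}
which are exactly the asserted expansions of $\tilde H$ and $\tilde\rho$. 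It therefore suffices to show that the perturbation $h$ contributes at most $O(r^{-n})$ to the mean curvature and $O(r^{-n-1})$ to the intrinsic scalar curvature of $S_r$.

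For the mean curvature, I would use $\tilde H = \mathrm{div}_{\tilde g}(\tilde\nu)$ in the Cartesian chart, with $\tilde\nu = \tilde g^{ij}\partial_j r/|dr|_{\tilde g}$. From $h_{ij} = O(r^{1-n})$ and $\partial_k h_{ij} = O(r^{-n})$ one reads off $\tilde g^{ij} - g_S^{ij} = O(r^{1-n})$, $|dr|_{\tilde g} - |dr|_{g_S} = O(r^{1-n})$, and a difference of ambient Christoffel symbols of order $O(r^{-n})$. Substituting these differences into the divergence formula and collecting terms yields $\tilde H - H_{g_S} = O(r^{-n})$, which absorbs into the error term of the expansion for $H_{g_S}$.

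For the scalar curvature, I would compute $\tilde\rho$ from the induced metric $\tilde g|_{TS_r} = g_S|_{TS_r} + h|_{TS_r}$. In spherical coordinates $\theta^A$ on $S_r$, the Jacobian $\partial x^i/\partial\theta^A = O(r)$ converts the Cartesian decay of $h$ into $(h|_{TS_r})_{AB} = O(r^{3-n})$, with analogous bounds on its first and second intrinsic derivatives. Since $(g_S|_{TS_r})^{AB} = O(r^{-2})$, the linearization of scalar curvature about $g_S|_{TS_r}$, schematically $\delta R \sim -\Delta_{g_S|_{TS_r}}\,\tr(h|_{TS_r}) + \mathrm{div}\,\mathrm{div}(h|_{TS_r}) + (\text{quadratic in } h,\partial h)$, produces a contribution of size $O(r^{-4})\cdot O(r^{3-n}) = O(r^{-n-1})$, with quadratic and cross terms no worse. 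Combined with the expansion of $\rho_{g_S}$, this yields the claimed expansion of $\tilde\rho$.

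The main obstacle is not conceptual but notational bookkeeping: one must carefully translate the ambient Cartesian decay estimates \eqref{decay2} of $h$ into intrinsic decay on the angular parameterization of $S_r$ and track each term in the formulas for $\tilde H$ and $\tilde\rho$. Because the targeted error bounds match exactly what \eqref{decay2} directly provides, no delicate cancellation of leading-order contributions between $g_S$ and $h$ is required --- every contribution of $h$ lands comfortably inside the claimed remainder.
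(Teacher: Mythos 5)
Your proposal is correct and follows the same overall strategy as the paper's appendix: write $\tilde g = g_S + h$ with $g_S$ the exact Schwarzschild metric, compute the Schwarzschild contributions to $\tilde H$ and $\tilde\rho$ explicitly, and verify that every $h$-contribution falls inside the stated remainders, with no cancellation between the two pieces required. The differences are only in the machinery. For the Schwarzschild part you quote the intermediate expansions of Lemma \ref{lemma_ADM_HF} with $a=m/2$ (legitimate, since Schwarzschild is harmonically flat with $U = 1 + \tfrac{m}{2|x|^{n-2}}$ exactly); the paper instead recomputes $\rho$ by a constant conformal rescaling on $S_r$ and $H$ via the first variation of area, so that the appendix stays logically independent of the HF lemma. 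For the perturbation, you estimate $\tilde H - H_{g_S} = O(r^{-n})$ from the divergence-of-the-unit-normal formula and $\tilde\rho - \rho_{g_S} = O(r^{-n-1})$ from the linearization of the intrinsic scalar curvature, whereas the paper carries out the same bookkeeping via the ratio of area forms ($\Phi = 1 + O(r^{1-n})$) and the difference of intrinsic Christoffel symbols ($\Psi^\mu_{\alpha\beta} = O(r^{1-n})$, $\partial\Psi = O(r^{1-n})$); both routes rest on the facts you identify, namely $(h|_{TS_r})_{AB} = O(r^{3-n})$ and $(g_S|_{TS_r})^{AB} = O(r^{-2})$. Two small cautions: the angular derivatives of $h|_{TS_r}$ do \emph{not} gain decay (they remain $O(r^{3-n})$), so "analogous bounds" must be read that way --- your arithmetic $O(r^{-4})\cdot O(r^{3-n}) = O(r^{-n-1})$ correctly uses the non-improved bound, and the result is tight; and the linearization of scalar curvature also contains a $-\langle\operatorname{Ric},h\rangle$ term, which is likewise $O(r^{-2})\cdot O(r^{-2})\cdot O(1)\cdot O(r^{3-n}) = O(r^{-n-1})$, so the conclusion is unaffected.
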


\begin{proof}
Let $g$ be the Schwarzschild metric of mass $m$, and let $h$ be as in \eqref{decay2}, i.e.
\begin{align*}
g &=  \left(1+ \frac{m}{2r^{n-2}}\right)^{\frac{4}{n-2}} \delta\\
\tilde g &= g + h,
\end{align*}
in the end of $M$.

We first address the scalar curvature of $S_r$. Let $\gamma$ and $\tilde \gamma$ be the Riemannian metrics on $S_r$ induced by $g$ and $\tilde g$, respectively. The coordinate sphere $S_r$ has constant scalar curvature with respect to the metric induced by $\delta$ equal to $\frac{(n-1)(n-2)}{r^2}$. Since the conformal factor relating $g$ to $\delta$ is constant on $S_r$, the scalar curvature of $(S_r,\gamma)$ can be found by rescaling:
\begin{align}
\rho &= \left(1+ \frac{m}{2r^{n-2}}\right)^{-\frac{4}{n-2}} \cdot \frac{(n-1)(n-2)}{r^2} \nonumber\\
&= \frac{(n-1)(n-2)}{r^2} -\frac{2(n-1)m}{r^n} + O(r^{-2(n-1)}). \label{eqn_rho}
\end{align}
We proceed to estimate the scalar curvature of $(S_r, \tilde \gamma)$ as follows.
Introduce spherical coordinates $(r,\phi^1, \ldots, \phi^{n-1})$ on the asymptotically flat end of $M$:
\begin{align*}
x^1 &= r \cos(\phi^1)\\
x^2 &= r \sin(\phi^1) \cos (\phi^2)\\
&\ldots\\
x^{n-1} &= r \sin(\phi^1) \sin(\phi^2) \ldots \cos (\phi^{n-1})\\
x^n &=r \sin(\phi^1) \sin(\phi^2) \ldots \sin (\phi^{n-1}).
\end{align*}
We use Greek indices for the directions tangent to $S_r$, i.e. $\phi^\alpha$ for $\alpha =1, \ldots, n-1$ for the coordinates on $S_r$ and $\partial_\alpha = \frac{\partial}{\partial \phi^\alpha}$ for their derivatives. Note that $\delta(\partial_\alpha, \partial_\beta)$ is $O(r^2)$.

First, express $\gamma$ and $\tilde \gamma$ in spherical coordinates on $S_r$:
\begin{align}
\gamma_{\alpha \beta} &= g(\partial_{\alpha}, \partial_{\beta}) \nonumber\\
\tilde \gamma_{\alpha \beta} &= \tilde g(\partial_{\alpha}, \partial_{\beta}) = \gamma_{\alpha \beta} + h_{\alpha \beta}, \label{eqn_tilde_gamma}
\end{align}
where $h_{\alpha \beta} = h(\partial_{\alpha}, \partial_{\beta})$ is $O(r^{3-n})$ by \eqref{decay2}. Both $\gamma_{\alpha \beta}$ and $\tilde \gamma_{\alpha \beta}$ are $O(r^2)$.
Also, we have the inverse metrics:
\begin{align}
\gamma^{\alpha \beta} &= O(r^{-2}) \label{gammaupper}\\
\tilde \gamma^{\alpha \beta} &= \gamma^{\alpha \beta} + O(r^{-n-1}). \label{tildegammaupper}
\end{align}
Note that the derivatives tangent to $S_r$ satisfy:
\begin{align}
\partial_{\mu} \gamma_{\alpha\beta} &= O(r^2) \label{gammaderiv}\\
\partial_{\mu} h_{\alpha\beta} &= O(r^{3-n})\nonumber\\
\partial_{\mu} \tilde \gamma_{\alpha\beta} &= O(r^2), \label{tildegammaderiv}
\end{align}
with the same orders for second derivatives. Similarly,
\begin{align}
\partial_{\mu} \gamma^{\alpha\beta} &= O(r^{-2})
\label{eqn_deriv_inv_metric}\\
\partial_{\mu} \tilde \gamma^{\alpha\beta} &= O(r^{-2}). \label{tildegammaupperderiv}
\end{align}

Next, let $\Gamma$ and $\tilde \Gamma$ denote the Christoffel symbols of $(S_r, \gamma)$, and $(S_r, \tilde \gamma)$, respectively, and define
$$\Psi_{\alpha \beta}^\mu = \tilde \Gamma_{\alpha \beta}^\mu - \Gamma_{\alpha \beta}^\mu.$$
By \eqref{gammaupper}, and \eqref{gammaderiv}, we have
\begin{equation}
\Gamma_{\alpha \beta}^\mu = O(1). \label{eqn_Gamma}
\end{equation}
Using \eqref{eqn_deriv_inv_metric} as well,
\begin{equation}
\partial_\nu \Gamma_{\alpha \beta}^\mu = O(1).\label{eqn_deriv_Gamma}
\end{equation}
Next, we need decay on $\Psi$ and $\partial \Psi$. Using \eqref{eqn_tilde_gamma}--\eqref{tildegammaupper},
\begin{align*}
\Psi_{\alpha \beta}^\mu %&= \tilde \Gamma_{\alpha \beta}^\mu - \Gamma_{\alpha \beta}^\mu\\
&=\tilde \gamma^{\mu \nu} \left( \partial_\beta \tilde \gamma_{\alpha \nu} + \partial_\alpha \tilde \gamma_{\beta \nu} - \partial_\nu \tilde \gamma_{\alpha \beta}\right) - \gamma^{\mu \nu}\left( \partial_\beta  \gamma_{\alpha \nu} + \partial_\alpha  \gamma_{\beta \nu} - \partial_\nu  \gamma_{\alpha \beta}\right) \\
&= O(r^{-2}) \left( \partial_\beta  h_{\alpha \nu} + \partial_\alpha  h_{\beta \nu} - \partial_\nu  h_{\alpha \beta}\right) + O(r^{-n-1}) \left( \partial_\beta \tilde \gamma_{\alpha \nu} + \partial_\alpha \tilde \gamma_{\beta \nu} - \partial_\nu \tilde \gamma_{\alpha \beta}\right).
\end{align*}
Since $h_{\alpha \beta}$ and $\partial_\mu h_{\alpha\beta}$ are $O(r^{3-n})$, and also by \eqref{tildegammaderiv}, we have
$$\Psi_{\alpha \beta}^\mu = O(r^{1-n}),$$
and a similar calculation, using \eqref{tildegammaupperderiv}, shows
$$\partial_\nu \Psi_{\alpha \beta}^\mu = O(r^{1-n}).$$

Finally:
\begin{align*}
\tilde \rho &= \tilde \gamma^{\beta \mu}\left(\partial_\alpha \tilde \Gamma_{\beta \mu}^\alpha - \partial_\mu \tilde \Gamma_{\alpha\beta}^\alpha + \tilde\Gamma_{\beta \mu}^\nu\tilde\Gamma_{\alpha \nu}^\alpha - \tilde\Gamma_{\alpha\beta}^\nu\tilde\Gamma_{\mu\nu}^\alpha\right)\\
&= \left( \gamma^{\beta \mu} + O(r^{-n-1})\right)\left[\partial_\alpha \left(\Gamma_{\beta \mu}^\alpha +\Psi_{\beta \mu}^\alpha\right) - \partial_\mu \left( \Gamma_{\alpha\beta}^\alpha + \Psi_{\alpha\beta}^\alpha\right)\right.\\
&\qquad\qquad \left.+ \left(\Gamma_{\beta \mu}^\nu +\Psi_{\beta \mu}^\nu\right) \left(\Gamma_{\alpha \nu}^\alpha+\Psi_{\alpha \nu}^\alpha\right) - \left(\Gamma_{\alpha\beta}^\nu+\Psi_{\alpha\beta}^\nu\right)\left(\Gamma_{\mu\nu}^\alpha+\Psi_{\mu\nu}^\alpha\right)\right]\\
&= \rho + O(r^{-n-1}),
\end{align*}
having used \eqref{gammaupper}, \eqref{tildegammaupper}, \eqref{eqn_Gamma}, and \eqref{eqn_deriv_Gamma}. Combining this with \eqref{eqn_rho}, equation \eqref{eqn_tilde_rho} follows.
 
\medskip

For the second part of the proof, we must compute the mean curvature of large coordinate spheres $S_r$ with respect to $\tilde g$. We approach this through the first variation of area.
Let $\omega_0, \omega$, and $\tilde \omega$ be the area forms of $S_r$ induced by $\delta$, $g$ and $\tilde g$, respectively. The respective mean curvature vectors $\mathbf {H_0}, \mathbf {H},\mathbf {\tilde H}$ of $S_r$ with respect to these metrics are characterized by the first variation of area formulas as follows:
\begin{align}
D_X \omega_0 &= \delta(X,  -\mathbf {H_0}) \omega_0  = \delta\left(X,  \frac{n-1}{r}\cdot \partial_r\right) \omega_0 \label{FVOA1}\\
D_X \omega &= g(X,  -\mathbf H) \omega\label{FVOA2}\\
D_X \tilde \omega &= \tilde g(X,  - \mathbf {\tilde H} ) \tilde \omega,\label{FVOA3}
\end{align}
where $D_X$ denotes an infinitesimal deformation of $S_r$ in the direction of $X$, where $X$ is a tangent vector field to $M$ along $S_r$.

We again use spherical coordinates as in the first part of the proof. Note that $(\phi^\alpha)$ give coordinates on $S_r$ that are orthogonal with respect to $\delta$, and hence with respect to the conformal metric $g$. In addition to the estimates of $\gamma_{\alpha \beta}, h_{\alpha \beta}$ and their tangential derivatives used in the first part of the proof, we also need estimates on the radial derivatives. By the decay of $g$ and $h$, as well as by \eqref{gammaupper}, we obtain:
\begin{align*}
\partial_{r} \gamma_{\alpha\beta} &= O(r^1)\\
\partial_{r} \gamma^{\alpha\beta} &= O(r^{-3})\\
\partial_{r} h_{\alpha\beta} &= O(r^{2-n}).
\end{align*}

We begin by computing the mean curvature $H$ of $S_r$ with respect to $g$; this is well-known, but we include it for completeness. The area forms $\omega_0$ and $\omega$ on $S_r$ are related by
\begin{align*}
\omega &= \left(1+ \frac{m}{2r^{n-2}}\right)^{\frac{2(n-1)}{n-2}} \omega_0.
\end{align*}
Then, using \eqref{FVOA1}, elementary calculations show:
\begin{align}
D_r \omega &=\frac{2(n-1)}{n-2} \left(1+ \frac{m}{2r^{n-2}}\right)^{\frac{2(n-1)}{n-2}-1}\cdot \left(\frac{(2-n)m}{2r^{n-1}}\right) \omega_0 +  \left(1+ \frac{m}{2r^{n-2}}\right)^{\frac{2(n-1)}{n-2}} D_r \omega_0\nonumber\\
&=\left(1+ \frac{m}{2r^{n-2}}\right)^{\frac{2(n-1)}{n-2}}\cdot\left[\frac{n-1}{r} - \frac{m(n-1)}{r^{n-1}} \left(1+ \frac{m}{2r^{n-2}}\right)^{-1}\right] \omega_0, \label{Dromega1}
\end{align}
where $D_r =D_{\partial_r}$. Now, using \eqref{FVOA2}, we have: 
\begin{align}
D_r \omega &= g(\partial_r,  -\mathbf H) \omega \nonumber\\
&=\left(1+ \frac{m}{2r^{n-2}}\right)^{\frac{2n}{n-2}} H \omega_0,\label{Dromega2}
\end{align}
where $H = |\mathbf{H}|_g$.
Now, combining \eqref{Dromega1} and \eqref{Dromega2}, elementary calculations show
\begin{equation}
H = \frac{n-1}{r} - \frac{(n-1)^2 m}{(n-2)r^{n-1}} + O(r^{-n}). \label{H_g}
\end{equation}

Now, we proceed to estimate the mean curvature with respect to $\tilde g$. Define a function $\Phi>0$ on the asymptotically flat end of $M$ so that
\begin{equation}
\label{eqn_Phi}
\tilde \omega = \sqrt{\Phi} \omega \qquad \text{ on } S_r
\end{equation}
i.e.,
$$\Phi = \frac{\det(\tilde \gamma_{\alpha \beta})}{\det( \gamma_{\alpha \beta})}.$$
Using Jacobi's formula for the derivative of the determinant, along with the known decay of $\gamma_{\alpha \beta},$ and $\tilde \gamma_{\alpha\beta}$  and their derivatives, we have the following asymptotics of $\Phi$:
\begin{align}
\Phi &= 1 + O(r^{1-n}) \label{Phi1}\\
\partial_\mu \Phi &= O(r^{1-n}) \label{Phi2}\\
\partial_r \Phi &= O(r^{-n}). \label{Phi3}
\end{align}

In order to compute $\mathbf{\tilde H}$, we compute tangential and radial variations of $\tilde \omega$ beginning with \eqref{eqn_Phi}:
\begin{align}
D_\mu \tilde \omega &= \frac{1}{2}\left( \partial_\mu \Phi\right) \Phi^{-1/2} \omega + \sqrt{\Phi}D_\mu \omega \nonumber\\
&= \frac{1}{2}\left( \partial_\mu \Phi\right) \Phi^{-1/2} \omega + \sqrt{\Phi} g(\partial_\mu, -\mathbf{H}) \omega \nonumber\\
&= O(r^{1-n}) \omega, \label{D_mu_tilde_omega}
\end{align}
where we have used the fact that $\mathbf H$ is $g$-orthogonal to $S_r$, as well as \eqref{FVOA2} and \eqref{Phi1}--\eqref{Phi2}. Next, for the radial directions:
\begin{align}
D_r \tilde \omega %&= \frac{1}{2}\left( \partial_r \Phi\right) \Phi^{-1/2} \omega + \sqrt{\Phi}D_r \omega \nonumber\\
&= \frac{1}{2}\left( \partial_r \Phi\right) \Phi^{-1/2} \omega + \sqrt{\Phi} g(\partial_r, -\mathbf{H}) \omega \nonumber\\
&= g(\partial_r, -\mathbf{H})\omega + O(r^{-n}) \omega, \label{D_r_tilde_omega}
\end{align}
having used \eqref{Phi1}, \eqref{Phi3}, and $H=O(r^{-1})$. 
The goal is to combine the last two statements with \eqref{FVOA3}. Specifically, we estimate \eqref{FVOA3} as follows:
\begin{align*}
D_X \tilde \omega &= (g+h)(X,  - \mathbf {\tilde H} ) \sqrt{\Phi} \omega\\
&=  g(X,  - \mathbf {\tilde H} ) \omega +  O (r^{-n})|X|_g \omega,
\end{align*}
having used the decay of $h$, \eqref{Phi1}, and $|\mathbf{\tilde H}|_g = O(r^{-1})$.
Define $\mathbf{Y}= \mathbf{\tilde H} - \mathbf{H}$. Then applying \eqref{D_r_tilde_omega} and the last equation (with $X = \partial_r$) and applying \eqref{D_mu_tilde_omega} and the last equation (with $X = \partial_\mu$) produces:
\begin{align}
g(\partial_r, \mathbf{Y}) &= O(r^{-n}) \label{g_d_r}\\
g(\partial_\mu, \mathbf{Y}) &= O(r^{1-n}). \label{g_d_mu}
\end{align}
By expanding $|\mathbf{Y}|_g^2$ in the $g$-orthogonal basis $(\partial_r, \partial_\mu)$ of $TM$ along $S_r$, and using \eqref{g_d_r}--\eqref{g_d_mu}, we obtain
\begin{equation}
|\mathbf{Y}|^2_g = O(r^{-2n}). \label{Y_norm}
\end{equation}
Finally, letting $\tilde H = |\mathbf{\tilde H}|_{\tilde g}$, we use the triangle inequality to show:
\begin{align*}
\left|\tilde H - H\right| %&= \left||\mathbf{\tilde H}|_{\tilde g} - |\mathbf H|_g\right|\\
&\leq \Big| |\mathbf{\tilde H}|_{\tilde g} - |\mathbf{H}|_{\tilde g}\Big| + \Big| |\mathbf{H}|_{\tilde g}-|\mathbf H|_g\Big|\\
&\leq |\mathbf{Y}|_{\tilde g} + \Big| |\mathbf{H}|_{\tilde g}-|\mathbf H|_g\Big|\\
&=\Big(g(\mathbf{Y},\mathbf{Y})+h(\mathbf{Y},\mathbf{Y})\Big)^{\frac{1}{2}} + \left|\left(g(\mathbf{H},\mathbf{H}) + h(\mathbf{H},\mathbf{H})\right)^{\frac{1}{2}} - g(\mathbf{H},\mathbf{H})^{\frac{1}{2}} \right|\\
&=|\mathbf{Y}|_g +|\mathbf{Y}|_g O(r^{1-n}) + H\cdot O(r^{1-n})\\
&= O(r^{-n}),
\end{align*}
by \eqref{Y_norm}.
Combining this with \eqref{H_g}, \eqref{eqn_tilde_H} follows.
\end{proof}

\begin{bibdiv}
 \begin{biblist}

 \bib{ADM}{article}{
   author={Arnowitt, R.},
   author={Deser, S.},
   author={Misner, C.},
   title={Coordinate invariance and energy expressions in general relativity},
   journal={Phys. Rev. (2)},
   volume={122},
   date={1961},
   pages={997--1006},
}

\bib{Ba0}{article}{
   author={Bartnik, R.},
   title={The mass of an asymptotically flat manifold},
   journal={Comm. Pure Appl. Math.},
   volume={39},
   date={1986},
   number={5},
   pages={661--693},
}

\bib{Ba1}{article}{
   author={Bartnik, R.},
   title={New definition of quasilocal mass},
   journal={Phys. Rev. Lett.},
   volume={62},
   date={1989},
   number={20},
   pages={2346--2348}
}

\bib{Ba2}{article}{
   author={Bartnik, R.},
   title={Energy in general relativity},
   conference={
      title={Tsing Hua lectures on geometry \&\ analysis},
      address={Hsinchu},
      date={1990--1991},
   },
   book={
      publisher={Int. Press, Cambridge, MA},
   },
   date={1997},
   pages={5--27}
}

\bib{Ba3}{article}{
   author={Bartnik, R.},
   title={Mass and 3-metrics of non-negative scalar curvature},
   conference={
      title={Proceedings of the International Congress of Mathematicians,
      Vol. II},
%      address={Beijing},
%      date={2002},
   },
   book={
      publisher={Higher Ed. Press, Beijing},
   },
   date={2002},
   pages={231--240}
}

\bib{B}{article}{
   author={Bray, H.},
   title={Proof of the Riemannian Penrose inequality using the positive mass
   theorem},
   journal={J. Differential Geom.},
   volume={59},
   date={2001},
   number={2},
   pages={177--267}
}

\bib{BL}{article}{
   author={Bray, H.},
   author={Lee, D.},
   title={On the Riemannian Penrose inequality in dimensions less than
   eight},
   journal={Duke Math. J.},
   volume={148},
   date={2009},
   number={1},
   pages={81--106}
}

\bib{CS}{article}{
  author={Carlotto, A.},
  author={Schoen, R.},
  title={Localizing solutions of the Einstein constraint equations},
  journal={Invent. Math.},
  volume={205},
  pages={559--615},
  date={2015}
  }

\bib{Chr}{article}{
   author={Chru\'sciel, P.},
   title={Boundary conditions at spatial infinity from a Hamiltonian point
   of view},
   conference={
      title={Topological properties and global structure of space-time},
      address={Erice},
      date={1985},
   },
   book={
      series={NATO Adv. Sci. Inst. Ser. B Phys.},
      volume={138},
      publisher={Plenum, New York},
   },
   date={1986},
   pages={49--59}
}

\bib{DM}{article}{
   author={Dai, X.},
   author={Ma, L.},
   title={Mass under the Ricci flow},
   journal={Comm. Math. Phys.},
   volume={274},
   date={2007},
   number={1},
   pages={65--80}
}

\bib{FST}{article}{
   author={Fan, X.-Q.},
   author={Shi, Y.},
   author={Tam, L.-F.},
   title={Large-sphere and small-sphere limits of the Brown-York mass},
   journal={Comm. Anal. Geom.},
   volume={17},
   date={2009},
   number={1},
   pages={37--72},
}

\bib{J}{article}{
   author={Jauregui, J.},
   title={On the lower semicontinuity of the ADM mass},
   journal={Comm. Anal. Geom},
   volume={26},
   number={1},
   date={2018}
}

\bib{JL}{article}{
   author={Jauregui, J.},
   author={Lee, D.},
   title={Lower semicontinuity of mass under $C^0$ convergence and Huisken's isoperimetric mass},
   journal={J. Reine Angew. Math., to appear}
%   eprint={http://arxiv.org/abs/1602.00732}
}

\bib{HI}{article}{
   author={Huisken, G.},
   author={Ilmanen, T.},
   title={The inverse mean curvature flow and the Riemannian Penrose
   inequality},
   journal={J. Differential Geom.},
   volume={59},
   date={2001},
   number={3},
   pages={353--437},
}

\bib{IMS}{article}{
   author={Isenberg, J.},
   author={Mazzeo, R.},
   author={Sesum, N.},
   title={Ricci flow on asymptotically conical surfaces with nontrivial
   topology},
   journal={J. Reine Angew. Math.},
   volume={676},
   date={2013},
   pages={227--248}
}

\bib{Li}{article}{
   author={Li, Y.},
   title={Ricci flow on asymptotically Euclidean manifolds},
   journal={Geom. Topol.},
   volume={22},
   date={2018},
   number={3},
   pages={1837--1891}
}

\bib{MM}{article}{
	author={McCormick, S.},
	author={Miao, P.},
	title={On a Penrose-like inequality in dimensions less than eight},
    journal={Int. Math. Res. Not., to appear}
}

\bib{MTX}{article}{
   author={Miao, P.},
   author={Tam, L.-F.},
   author={Xie, N.},
   title={Quasi-local mass integrals and the total mass},
   journal={J. Geom. Anal.},
   volume={27},
   date={2017},
   number={2},
   pages={1323--1354},
   issn={1050-6926},
   review={\MR{3625154}},
   doi={10.1007/s12220-016-9721-z},
}

\bib{OW}{article}{
   author={Oliynyk, T.},
   author={Woolgar, E.},
   title={Rotationally symmetric Ricci flow on asymptotically flat
   manifolds},
   journal={Comm. Anal. Geom.},
   volume={15},
   date={2007},
   number={3},
   pages={535--568},
}

\bib{Sch}{article}{
   author={Schoen, R.},
   title={Variational theory for the total scalar curvature functional for
   Riemannian metrics and related topics},
   conference={
      title={Topics in calculus of variations},
      address={Montecatini Terme},
      date={1987},
   },
   book={
      series={Lecture Notes in Math.},
      volume={1365},
      publisher={Springer, Berlin},
   },
   date={1989},
   pages={120--154}
}

\bib{SY}{article}{
	author={Schoen, R.},
	author={Yau, S.-T.},
	title={On the proof of the positive mass conjecture in general relativity},
	journal={Comm. Math. Phys.},
	volume={65},
	year={1979},
	pages={45--76},
}

\bib{SY2}{article}{
   author={Schoen, R.},
   author={Yau, S.-T.},
   title={Complete manifolds with nonnegative scalar curvature and the
   positive action conjecture in general relativity},
   journal={Proc. Nat. Acad. Sci. U.S.A.},
   volume={76},
   date={1979},
   number={3},
   pages={1024--1025}
}

\bib{SW}{article}{
   author={Sormani, C.},
   author={Wenger, S.},
   title={The intrinsic flat distance between Riemannian manifolds and other
   integral current spaces},
   journal={J. Differential Geom.},
   volume={87},
   date={2011},
   number={1},
   pages={117--199}
}

\bib{Wh}{article}{
   author={White, B.},
   title={The size of the singular set in mean curvature flow of mean-convex
   sets},
   journal={J. Amer. Math. Soc.},
   volume={13},
   date={2000},
   number={3},
   pages={665--695 (electronic)}
}

\bib{W}{article}{
	author={Witten, E.},
	title={A new proof of the positive energy theorem},
	journal={Comm. Math. Phys.},
	volume={80},
	year={1981},
	pages={381-402},
}	

 \end{biblist}
\end{bibdiv}

\end{document}